\newcommand\numberthis{\addtocounter{equation}{1}\tag{\theequation}}
\DeclareMathOperator{\Jac}{Jac}
\DeclareMathOperator{\USp}{USp}
\DeclareMathOperator{\Sp}{Sp}
\DeclareMathOperator{\U}{U}
\DeclareMathOperator{\GL}{GL}
\DeclareMathOperator{\ST}{ST}
\DeclareMathOperator{\Gal}{Gal}
\DeclareMathOperator{\End}{End}
\DeclareMathOperator{\diag}{diag}
\DeclareMathOperator{\Span}{Span}
\DeclareMathOperator{\Aut}{Aut}
\DeclareMathOperator{\Zar}{Zar}
\DeclareMathOperator{\Frob}{Frob}
\DeclareMathOperator{\Tr}{Tr}
\DeclareMathOperator{\ord}{ord}
\newtheorem{theorem}{Theorem}[section]
\newtheorem{example}[theorem]{Example}
\newtheorem{proposition}[theorem]{Proposition}
\newtheorem{claim}[theorem]{Claim}
\newtheorem{lemma}[theorem]{Lemma}
\newtheorem{conjecture}[theorem]{Conjecture}
\newtheorem{definition}[theorem]{Definition}
\newtheorem{corollary}[theorem]{Corollary}
\newtheorem{algorithm}[theorem]{Algorithm}
\theoremstyle{remark}
\newtheorem{remark}{Remark}
\newtheorem*{remark*}{Remark}
\author{Melissa Emory }
\address{Department of Mathematics, University of Toronto; 40 St. George Street, Toronto, Ontario, Canada M5S 2E4}
\email{memory@math.toronto.edu}
\author{Heidi Goodson}
\address{Department of Mathematics, Brooklyn College; 2900 Bedford Avenue, Brooklyn, NY 11210 USA}
\email{heidi.goodson@brooklyn.cuny.edu}
\author{Alexandre Peyrot}
\email{peyrotalexandre12@gmail.com}
\title[Towards the Sato-Tate Groups of Trinomial Hyperelliptic Curves]{Towards the Sato-Tate Groups of Trinomial Hyperelliptic Curves}
\begin{document}

\begin{abstract}
We consider the identity component of the Sato-Tate group of the Jacobian of curves of the form
$$C_1\colon y^2=x^{2g+2}+c,  C_2\colon y^2=x^{2g+1}+cx,  C_3\colon y^2=x^{2g+1} +c,$$
where $g$ is the genus of the curve and $c\in\mathbb Q^*$ is constant.  

We approach this problem in three ways. First we use a theorem of Kani-Rosen to determine the splitting of Jacobians for $C_1$ curves of genus 4 and 5 and prove what the identity component of the Sato-Tate group is in each case. We then determine the splitting of Jacobians of higher genus $C_1$ curves by finding maps to lower genus curves and then computing pullbacks of differential 1-forms. In using this method, we are able to relate the Jacobians of curves of the form $C_1$, $C_2$, and $C_3$. Finally, we develop a new method for computing the identity component of the Sato-Tate groups of the Jacobians of the  three families of curves. We use this method to compute many explicit examples, and find surprising patterns in the shapes of the identity components $\ST^0(C)$ for these families of curves.  
\end{abstract}

\keywords{Sato-Tate Group; Abelian Varieties; Hyperelliptic Curve; Stickelberger’s Congruence}

\subjclass[2020]{11G10, 11G30}

\maketitle
\section{Introduction}\label{sec:intro}

Let $C$ be a smooth projective curve defined over $\mathbb Q$. For primes $p$ of good reduction, we define the trace of Frobenius to be
$$t_p(C)=p+1-\#\overline{C}(\mathbb F_p)$$
where $\overline{C}$ denotes the reduction of $C$ modulo $p$. A theorem of Weil \cite{Weil1948} gives the following bound for the trace of Frobenius 
$$|t_p|\leq 2g\sqrt{p},$$
where $g$ is the genus of the curve.

Let $x_p=t_p/\sqrt{p}$ denote the normalized trace. Then the Weil bounds tell us that $x_p\in[-2g,2g]$, and we can look at the distribution of the $x_p$ in this interval as $p\rightarrow \infty$. This distribution is known for elliptic curves. The values are not uniformly distributed over the interval $[-2,2]$, though they do have a predictable limiting pattern. In the 1960s, Sato and Tate independently conjectured that, for elliptic curves defined over $\mathbb Q$ without complex multiplication, the normalized traces are equidistributed with respect to the measure $\frac{1}{2\pi}{\sqrt{4-x^2}} dx$. Barnet-Lamb, Geraghty, Harris, Shepard-Barron, and Taylor \cite{Barnet2011, Harris2010} recently proved the Sato-Tate Conjecture for elliptic curves without complex multiplication. As is often the case with celebrated results in number theory, proving the Sato-Tate Conjecture required heavy machinery and merged three massive mathematical theories: $L$-functions, automorphic forms, and Galois representations.

Traces of  higher genus curves are expected to have Sato-Tate-like distributions. To determine the distributions, we study the Sato-Tate group of the Jacobians of the curves.  Recall that the Jacobian of a genus $g$ curve is an abelian variety of dimension $g$. Associated to any abelian variety of dimension $g$ over a number field  there is a compact subgroup of $\USp(2g)$ known as the Sato-Tate group (see \cite[Section 3.2]{SutherlandAWSNotes}) that is uniquely determined up to conjugacy and comes equipped with a map that sends Frobenius elements to conjugacy classes with the appropriate normalized trace.  It is conjectured that if we order Frobenius elements by norm, this sequence of conjugacy classes is equidistributed with respect to the push forward of the Haar measure on the Sato-Tate group, and this can be viewed as a generalization of the Sato-Tate conjecture (see, for example, \cite[Section 3.3]{SutherlandAWSNotes}).

Determining these Sato-Tate groups is the source of ongoing work. For example, Fit\'e, Kedlaya, Rotger, and Sutherland \cite{Fite2012} determine the complete set of Sato-Tate groups that arise for abelian surfaces over number fields. In \cite{Fite2016}, Fit\'e and Sutherland give the Sato-Tate groups and distributions for the following families of genus 3 hyperelliptic curves
$$y^2=x^8+c \; \text{ and } \; y^2=x^7-cx,$$
where $c\in\mathbb Q^*$ is constant. Fit\'e, Lorenzo Garc\'ia, and Sutherland have also worked out the Sato-Tate groups for other genus 3 curves (see \cite{Fite2018}). In \cite{Arora2016}, Arora, et al. prove a generalized Sato-Tate conjecture for $\mathbb Q$-twists of the genus 3 curve $y^2=x^8-14x^4+1$. See \cite{fit2019satotate} for an in-depth discussion of the Sato-Tate groups of abelian varieties of dimension $3$. 

In this paper, we extend this work to families of hyperelliptic curves over $\mathbb Q$ of the form
$$C_1\colon y^2=x^{2g+2}+c, \quad C_2\colon y^2=x^{2g+1}+cx, \quad C_3\colon y^2=x^{2g+1} +c,$$ 
where $g$ is the genus of the curve and $c\in\mathbb Q^*$ is a constant.  We denote the Sato-Tate group of the Jacobian of a smooth projective curve   by $\ST(C):=\ST(\Jac(C)_{\mathbb Q})$.  Note that while the Sato-Tate group is a compact Lie group, it may not be connected \cite{Fite2016}. In our work we study the connected component of the  identity  of $\ST(C)$, denoted $\ST^0(C):=\ST^0(\Jac(C)_{\mathbb Q})$. Note that $\ST^0(C)$ is isomorphic to the full Sato-Tate group $\ST(\Jac(C)_{F})$, where $F$ is the minimal extension over which all endomorphisms of  $\Jac(C)$ are defined.

This problem of determining the identity component of the Sato-Tate groups of families of trinomial hyperelliptic curves was originally posed as part of the Arizona Winter School  {\it Analytic Methods in Arithmetic Geometry} in March 2016. Using similar methods to \cite{Fite2016} and a theorem of  Kani-Rosen \cite[Theorem C]{KaniRosen1989}, we obtain the following explicit results for $\ST^0(C)$ for families of genus 4 and 5 curves (the notation is defined in Section \ref{sec:background}). 
\begin{theorem}\label{genus4}
The identity component of the Sato-Tate group of the Jacobian of the hyperelliptic curve $y^2=x^{10}+c$ is $\U(1)_2 \times \U(1)_2.$
\end{theorem}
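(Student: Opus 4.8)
The plan is to realize $\Jac(C)$, up to isogeny, as a product of Jacobians of genus $2$ curves with complex multiplication, and then read off $\ST^0(C)$ from the Sato--Tate groups of the factors. Write $C\colon y^2=x^{10}+c$. Besides the hyperelliptic involution $\iota\colon(x,y)\mapsto(x,-y)$, the curve carries the automorphism $a\colon(x,y)\mapsto(-x,y)$, and together these generate a group $V\cong(\mathbb Z/2\mathbb Z)^2$ whose third nontrivial element is $a\iota\colon(x,y)\mapsto(-x,-y)$. First I would compute the three intermediate quotients. Setting $u=x^2$ gives $C/\langle a\rangle\colon y^2=u^5+c$, a genus $2$ curve I will call $X_1$; using the invariants $u=x^2$ and $s=xy$, the quotient $C/\langle a\iota\rangle$ is the genus $2$ curve $X_2\colon s^2=u^6+cu$; and both $C/\langle\iota\rangle$ and $C/V$ are rational.

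With these quotients in hand I would apply the theorem of Kani--Rosen. The idempotent relation $e_{\{1\}}+2e_{V}=e_{\langle a\rangle}+e_{\langle\iota\rangle}+e_{\langle a\iota\rangle}$ in $\mathbb Q[V]$ yields the isogeny $\Jac(C)\sim\Jac(X_1)\times\Jac(X_2)$, the genus-$0$ quotients contributing nothing; the dimension count $2+2=4$ confirms that no factor is lost. A change of variables $u\mapsto\lambda/u$ with $\lambda^5=c^2$ then identifies $X_2$ with $X_1$ over $\overline{\mathbb Q}$, so that geometrically $\Jac(C)\sim\Jac(X_1)^2$. Since $\ST^0$ depends only on the base change to $\overline{\mathbb Q}$, this reduces the problem to understanding the single CM surface $\Jac(X_1)$.

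To analyze $\Jac(X_1)$ I would use the order-$5$ automorphism $u\mapsto\zeta_5 u$, which endows $\Jac(X_1)$ with an action of $\mathbb Z[\zeta_5]$; since $\mathbb Q(\zeta_5)$ is a quartic CM field with no imaginary quadratic subfield, every CM type is primitive and $\Jac(X_1)$ is a \emph{simple} abelian surface with complex multiplication by $\mathbb Q(\zeta_5)$. Reading off the action on the differentials $du/y$ and $u\,du/y$ shows the CM type to be $\{\sigma_0,\sigma_1\}$, where $\sigma_0=\mathrm{id}$ and $\sigma_1\colon\zeta_5\mapsto\zeta_5^2$. The crux of the argument, and what I expect to be the main obstacle, is to show that the two normalized Frobenius eigenvalues $u_0,u_1\in\U(1)$ attached to this CM type satisfy no multiplicative relation, so that $\ST^0(\Jac(X_1))$ is the full torus $\U(1)\times\U(1)$ and not a one-dimensional subtorus. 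I would verify this by checking that no nonzero $\mathbb Z$-combination of $\sigma_0,\sigma_1$ is $\Gal(\mathbb Q(\zeta_5)/\mathbb Q)$-invariant --- equivalently, that the $\mathbb Z[\Gal(\mathbb Q(\zeta_5)/\mathbb Q)]$-module generated by the CM cocharacter has rank $3$ --- which forces the associated Hodge group, and hence $\ST^0(\Jac(X_1))$, to have dimension $2$.

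Finally I would assemble the answer. Because $\Jac(C)\sim\Jac(X_1)^2$ geometrically, the symplectic eigenvalue multiset $\{u_0,u_1,\bar u_0,\bar u_1\}$ of $\Jac(X_1)$ is doubled, so each of the two independent circle parameters $u_0,u_1$ occurs with multiplicity $2$ in the normalized Frobenius conjugacy classes of $\Jac(C)$. Reorganizing the eigenvalues block-diagonally as $(u_0,u_0,\bar u_0,\bar u_0)\oplus(u_1,u_1,\bar u_1,\bar u_1)$ exhibits $\ST^0(C)$ inside $\USp(8)$ as exactly $\U(1)_2\times\U(1)_2$, as claimed. The splitting and the identification of the CM field are routine once the automorphisms are written down; the only genuine difficulty is the torus-dimension computation of the preceding paragraph.
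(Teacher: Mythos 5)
Your proposal is correct and arrives at the same key intermediate result as the paper---the geometric isogeny $\Jac(C)\sim\Jac(X_1)^2$ with $X_1\colon y^2=u^5+c$---but by a genuinely different route on both halves of the argument. For the splitting, the paper applies Kani--Rosen to the two involutions $\alpha(x,y)=(c^{1/5}x^{-1},c^{1/2}yx^{-5})$ and $\beta(x,y)=(c^{1/5}x^{-1},-yx^{-5})$, whose product is (up to a constant) the hyperelliptic involution; this gives $\Jac(C)\sim\Jac(C/\langle\alpha\rangle)^2$, and the identification with $\Jac(C')^2$ for $C'\colon y^2=x^5+c$ is then forced by the simplicity of $\Jac(C')$, via $\End(\Jac(C'_{\overline{\mathbb Q}}))_{\mathbb Q}\simeq\mathbb Q(\zeta_5)$. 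You instead quotient by the Klein four group generated by $(x,y)\mapsto(-x,y)$ and the hyperelliptic involution, which produces both genus-$2$ factors explicitly ($X_1$ and $X_2\colon s^2=u^6+cu$; your idempotent relation and genus counts are right), and you identify $X_2\cong X_1$ over $\overline{\mathbb Q}$ by a direct change of variables (the $\lambda^5=c^2$ computation checks out), so squareness comes for free without the simplicity step. For the identity component, the paper simply cites the genus-$2$ classification of Fit\'e--Kedlaya--Rotger--Sutherland, whereas you recompute $\ST^0(\Jac(X_1))$ from CM theory: primitivity of the CM type (as $\mathbb Q(\zeta_5)$ has no imaginary quadratic subfield) and the rank of the lattice spanned by the Galois translates of the CM cocharacter $\sigma_0+\sigma_1$, which is indeed $3$ (the four translates satisfy exactly one relation), giving a two-dimensional Hodge torus. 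This extra work buys something real: at this point the paper quotes $\ST^0(C')=\U(1)_2$, which in the paper's own notation is the \emph{one}-dimensional diagonal torus and is inconsistent with the asserted (and correct) two-dimensional conclusion $\U(1)_2\times\U(1)_2$; the right value for a simple surface with quartic CM is $\U(1)\times\U(1)$, and your torus-dimension computation establishes precisely this, making your account cleaner than the source at its one delicate point. One small repair: your phrase ``no nonzero $\mathbb Z$-combination of $\sigma_0,\sigma_1$ is $\Gal(\mathbb Q(\zeta_5)/\mathbb Q)$-invariant'' is not the operative criterion (as literally stated it is vacuous); the condition that does the work is the parenthetical one---that the $\mathbb Z[\Gal(\mathbb Q(\zeta_5)/\mathbb Q)]$-module generated by the CM cocharacter has rank $3$---and that is what your verification in fact uses.
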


\begin{theorem}\label{genus5} 
The identity component of the Sato-Tate group of the Jacobian of the hyperelliptic curve $y^2=x^{12}+c$ is $\U(1)_2 \times \U(1)_3.$
\end{theorem}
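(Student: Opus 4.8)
The plan is to combine the Kani--Rosen decomposition \cite[Theorem C]{KaniRosen1989} with an analysis of the complex multiplication (CM) types of the resulting factors, working throughout over $\overline{\mathbb{Q}}$, since the identity component $\ST^0$ depends only on the base change of the Jacobian to $\overline{\mathbb{Q}}$ (the constant $c$ and the field of definition affect only the component group $\ST/\ST^0$). Write $C:y^2=x^{12}+c$, a curve of genus $5$ carrying the order-$12$ automorphism $\sigma\colon(x,y)\mapsto(\zeta_{12}x,y)$. For each $m\mid 12$, the quotient of $C$ by the order-$(12/m)$ subgroup of $\langle\sigma\rangle$ is, after setting $u=x^{12/m}$, the curve $D_m\colon y^2=u^{m}+c$, of genus $2,1,1,0,0$ for $m=6,4,3,2,1$. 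Applying Kani--Rosen and passing to new parts, I expect an isogeny
$$\Jac(C)\sim B_{12}\times \Jac(D_6)^{\mathrm{new}}\times\Jac(D_4)\times\Jac(D_3),$$
where $B_{12}=\Jac(C)^{\mathrm{new}}$ has dimension $2$ and the other three factors have dimension $1$. I would confirm both the decomposition and the dimension count by computing $\sigma^{*}$ on the basis $x^{i}\,dx/y$, $0\le i\le4$, of holomorphic differentials: the eigenvalue on $x^{i}\,dx/y$ is $\zeta_{12}^{\,i+1}$, and sorting these eigenvalues by the order of the corresponding root of unity separates the four isotypic pieces.

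Next I would read off the CM structure from these eigenvalues. The factor $\Jac(D_4)$ (eigenvalue $\zeta_{12}^{3}=i$) is a CM elliptic curve with $j$-invariant $1728$; the factors $\Jac(D_3)$ and $\Jac(D_6)^{\mathrm{new}}$ (eigenvalues $\zeta_{12}^{4}=\zeta_3$ and $\zeta_{12}^{2}=\zeta_6$) are CM elliptic curves with $j$-invariant $0$; and $B_{12}$ (eigenvalues $\zeta_{12}$ and $\zeta_{12}^{5}$) is an abelian surface with CM by $\mathbb{Q}(\zeta_{12})$. The crux of the argument is to show that $B_{12}$ is \emph{not} simple over $\overline{\mathbb{Q}}$: its CM type is $\Phi=\{1,5\}\subset(\mathbb{Z}/12)^{*}=\Gal(\mathbb{Q}(\zeta_{12})/\mathbb{Q})$, and since $\Phi$ equals the subgroup $H=\{1,5\}=\Gal(\mathbb{Q}(\zeta_{12})/\mathbb{Q}(i))$ it is a single $H$-coset, so $(\mathbb{Q}(\zeta_{12}),\Phi)$ is imprimitive and induced from $\mathbb{Q}(i)$. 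By Shimura--Taniyama theory this yields $B_{12}\sim_{\overline{\mathbb{Q}}}E_i^{2}$ for a CM elliptic curve $E_i$ with $j$-invariant $1728$.

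With $B_{12}$ resolved, the final step is to collapse the factors using the fact that two elliptic curves with the same $j$-invariant are isomorphic over $\overline{\mathbb{Q}}$. All factors with CM by $\mathbb{Q}(i)$ have $j=1728$ and all factors with CM by $\mathbb{Q}(\sqrt{-3})$ have $j=0$, so
$$\Jac(C)\sim_{\overline{\mathbb{Q}}}E_i^{3}\times E_{\sqrt{-3}}^{2},$$
where $E_i$ and $E_{\sqrt{-3}}$ are CM elliptic curves with $j$-invariants $1728$ and $0$. Because $\ST^0(E^{n})=\U(1)_n$ for a CM elliptic curve $E$, and because the two CM fields $\mathbb{Q}(i)$ and $\mathbb{Q}(\sqrt{-3})$ are distinct so that the corresponding tori are independent, I would conclude $\ST^0(\Jac(C))=\U(1)_3\times\U(1)_2=\U(1)_2\times\U(1)_3$.

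The main obstacle is the middle step. Grouping the Kani--Rosen factors by the CM fields that appear \emph{a priori}---$\mathbb{Q}(\zeta_{12})$, $\mathbb{Q}(\sqrt{-3})$, and $\mathbb{Q}(i)$---would suggest a rank-$3$ Sato--Tate torus of the form $\U(1)_2\times\U(1)_2\times\U(1)$; it is precisely the imprimitivity of the CM type of $B_{12}$, together with the coincidences of $j$-invariants over $\overline{\mathbb{Q}}$, that lowers the rank to $2$ and regroups the dimensions as $3+2$. Establishing this imprimitivity, and then checking that $E_i$ and $E_{\sqrt{-3}}$ are genuinely non-isogenous---so that no further collapse to a diagonal $\U(1)_5$ occurs---is where the real content of the proof lies.
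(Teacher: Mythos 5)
Your proposal is correct, and it arrives at the same splitting $\Jac(C)\sim E_i^{3}\times E_{\sqrt{-3}}^{2}$ as the paper, but the crucial middle step is genuinely different. The paper also invokes Kani--Rosen, but with the inversion automorphisms $\alpha,\beta\colon(x,y)\mapsto\bigl(c^{1/(g+1)}x^{-1},\pm c^{1/2}y/x^{g+1}\bigr)$ together with $\gamma\colon(x,y)\mapsto(\xi_{k+1}x,y)$, obtaining $\Jac(C)\sim\Jac(C/\langle\alpha\rangle)^{2}\times E_1$ with $E_1\colon y^2=x^4+c$; it then computes the genus-$2$ quotient explicitly with Magma, $C'/\langle\alpha\rangle\colon y^2=x^6-6x^4+9x^2-2$, and exhibits via $x\mapsto x^2$ an elliptic factor $y^2=x^3-6x^2+9x-2$ with CM by $\mathbb{Q}(i)$, which together with the map $(x,y)\mapsto(x^4,y)$ to $y^2=x^3+c$ gives $\Jac(C/\langle\alpha\rangle)\sim E_1\times E_2$ and hence $\Jac(C)\sim E_1^{3}\times E_2^{2}$. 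You instead decompose $\Jac(C)$ into $\mu_{12}$-isotypic pieces via the quotients $D_m\colon y^2=u^m+c$ and resolve the two-dimensional new part $B_{12}$ by CM theory: the type read off from the eigenvalues $\zeta_{12},\zeta_{12}^{5}$ is a coset of $\Gal(\mathbb{Q}(\zeta_{12})/\mathbb{Q}(i))=\{1,5\}$, hence imprimitive and induced from $\mathbb{Q}(i)$, so Shimura--Taniyama gives $B_{12}\sim E_i^{2}$. (A convention caveat: reading the type on $\Omega^1$ versus the tangent space gives $\{1,5\}$ or its conjugate $\{7,11\}$, but both are cosets of the same subgroup fixing $\mathbb{Q}(i)$, so the conclusion is unaffected; and since $(\mathbb{Z}/12)^{*}\cong(\mathbb{Z}/2)^{2}$, \emph{every} CM type on $\mathbb{Q}(\zeta_{12})$ is imprimitive, though identifying the correct coset is essential, as the other choice would yield $E_{\sqrt{-3}}^{2}$ and a different answer.) What each approach buys: the paper's argument is elementary and fully explicit, but its key input --- the CM field of the elliptic factor of the quotient --- is a machine computation tied to a specific model; your imprimitivity argument replaces that computation with structural CM theory, explains \emph{why} the surface splits, and adapts directly to new parts of other curves $y^2=x^N+c$, in a spirit close to the character-theoretic Algorithm of Section 6. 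You are also right to flag, and correct to dispatch, the independence of the two circles at the end: distinct imaginary quadratic CM fields admit no multiplicative character relation, so no collapse to a diagonal $\U(1)_5$ occurs --- a point worth care in view of the paper's remark on $y^2=x^9-1$, where such a collapse of the naive product does happen; the paper itself passes over this step silently.
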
 
These results are proved in Section \ref{sec:genus45proof}. The methods used for the proof of Theorem \ref{genus4} and Theorem \ref{genus5}  require using automorphisms and morphisms of curves to prove the result.   To generalize results like those of Theorem \ref{genus4} and Theorem \ref{genus5} to higher genus $C_1$ curves, we prove a partial splitting of the Jacobians of higher genus curves in the following theorem (see Theorem \ref{splitting}).

\begin{theorem}\label{splittheorem}
Let $v_2\colon  \mathbb Q^* \rightarrow \mathbb Z$ denote the $2$-adic valuation map, i.e. $v_2(a/b) = \alpha$, where $\frac{a}{b}=2^\alpha\frac{e}{d}$ and $p$ does not divide $e$ or $d$. Let $C_1\colon  y^2= x^{2g+2}+c$ be a hyperelliptic curve of genus $g$ and write $k:= v_2(g+1)$. Then we have the following isogeny over $\overline{\mathbb Q}$.
$$\Jac(C_1) \sim \Jac(y^2= x^{{(g+1)}/{2^k}}+c)^2 \times \prod_{i=0}^{k-1} \Jac(y^2=x^{{(g+1)}/{2^{i}}+1}+cx),$$
which relates the curves
$$C_1\colon y^2=x^{2g+2}+c, \quad C_2\colon y^2=x^{2g+1}+cx, \quad C_3\colon y^2=x^{2g+1} +c.$$ 
\end{theorem}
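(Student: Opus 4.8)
The plan is to prove the statement by iterating a single quotient construction and tracking its effect on holomorphic $1$-forms. The building block is the reduction of an even-degree curve $y^2=x^{2N}+c$: the two morphisms $\pi_1\colon (x,y)\mapsto(x^2,y)$ and $\pi_2\colon (x,y)\mapsto(x^2,xy)$ exhibit it as a cover of $D_1\colon y^2=u^N+c$ and of $D_2\colon w^2=u^{N+1}+cu$, where $u=x^2$ and $w=xy$ (so that $w^2=x^2(x^{2N}+c)=u^{N+1}+cu$). First I would prove the isogeny $\Jac(y^2=x^{2N}+c)\sim\Jac(D_1)\times\Jac(D_2)$ by the differential-forms method used elsewhere in the paper. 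Taking the basis $\{x^{j}\,dx/y : 0\le j\le N-2\}$ of $H^0(\Omega^1)$ of the top curve, one computes $\pi_1^{*}(u^{j}\,du/y)=2x^{2j+1}\,dx/y$ and $\pi_2^{*}(u^{j}\,du/w)=2x^{2j}\,dx/y$, so that the pullbacks from $D_1$ are exactly the odd-power forms and those from $D_2$ exactly the even-power forms. These two images are disjoint and together span $H^0(\Omega^1)$, while the genera satisfy $\dim\Jac(D_1)+\dim\Jac(D_2)=N-1=\dim\Jac(y^2=x^{2N}+c)$; the Kani--Rosen criterion then upgrades the pair of quotient maps to the claimed isogeny, uniformly in the parity of $N$.

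Next I would iterate, beginning from $N_0=g+1=2^{k}m$ with $m$ odd. Each reduction splits off the $C_2$-type factor $D_2$ (degree $N+1$) and leaves $D_1\colon y^2=u^N+c$, which is again of $C_1$-type precisely when $N$ is even, i.e.\ until the full $2$-part of $g+1$ has been consumed. Applying the reduction $k+1$ times therefore produces $C_2$-factors of degrees $2^{k}m+1,\;2^{k-1}m+1,\;\dots,\;2m+1,\;m+1$ and terminates in the odd curve $y^2=u^{m}+c$ of $C_3$-type. Rewriting exponents via $g=2^{k}m-1$ gives $m=\frac{g-(2^{k}-1)}{2^{k}}+1$ for the leftover curve and, for the $i$-th of the first $k$ factors, degree $2^{i}m+1=\frac{g-(2^{k-i}-1)}{2^{k-i}}+2$, which matches the product factors in the statement (with the $i$-dependence of the exponent made explicit).

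It remains to account for the square. At the final stage the $C_2$-factor is $w^2=u^{m+1}+cu$ with $m+1$ even, and I would show it is isomorphic over $\overline{\mathbb{Q}}$ to the leftover $C_3$-curve $y^2=u^{m}+c$: the substitution $u\mapsto 1/u$ together with $W=w\,u^{(m+1)/2}$ turns it into $W^2=cu^{m}+1$, which becomes $y^2=x^{m}+c$ after rescaling $x$ over $\overline{\mathbb{Q}}$. These two factors, both of genus $\tfrac{m-1}{2}$, are thus isogenous and combine to give $\Jac(y^2=x^{m}+c)^{2}$. When $k=0$ the product is empty and the entire argument collapses to this single reduction, recovering $\Jac(C)\sim\Jac(y^2=x^{m}+c)^{2}$, which is consistent with Theorem~\ref{genus4}.

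The main difficulty is not any individual computation but the bookkeeping that makes the induction valid: one must verify at every stage that $D_1$ remains of $C_1$-type until $v_2(g+1)$ is exhausted, that the dimension identity $\sum\dim\Jac(D_i)=g$ persists so that the Kani--Rosen criterion keeps applying, and that the degenerate small-$k$ cases stay consistent. The technical heart is the claim that the two systems of pullback forms are complementary and exhaust $H^0(\Omega^1)$ -- the step that turns the mere existence of quotient maps into an actual splitting of the Jacobian -- but the clean separation into even and odd powers of $x$ makes this transparent, so I expect the indexing of the iterated decomposition, rather than the geometry, to be where care is most needed.
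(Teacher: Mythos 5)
Your proof is correct, and at its core it is the paper's argument: the maps $(x,y)\mapsto(x^2,y)$ and $(x,y)\mapsto(x^2,xy)$, the pullback-of-differentials criterion quoted from Fit\'e--Sutherland, and induction on $v_2(g+1)$ are exactly what the paper uses (its Lemma~\ref{oddcase} is your building block in the case $N$ even). The genuine difference is how the square is produced. The paper proves a separate even-genus lemma (Lemma~\ref{evencase}) using two maps $\phi$ and $\phi\circ\alpha$ to the \emph{same} curve $y^2=x^{g+1}+c$, where $\alpha(x,y)=(c^{1/(g+1)}x^{-1},\,c^{1/2}yx^{-(g+1)})$, and invokes it at the final inductive stage; you instead run the uniform split one extra time --- your bookkeeping for $N$ odd checks out, with $D_1$ contributing the odd exponents $1,\dots,N-2$ and $D_2$ the even exponents $0,\dots,N-3$, and the genera $\frac{N-1}{2}+\frac{N-1}{2}=N-1$ --- and then identify the last factor $w^2=u^{m+1}+cu$ with the leftover $y^2=u^m+c$ via inversion at infinity, which is legitimate over $\overline{\mathbb Q}$ (indeed $u\mapsto 1/u$, $w\mapsto w/u^{(m+1)/2}$ gives $w^2=cu^m+1$ already over $\mathbb Q$, and only the rescaling needs radicals of $c$). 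These are the same computation in different packaging: composing your $\pi_2$ with the inversion isomorphism recovers the paper's map $\tilde\phi(x,y)=(c^{2/(g+1)}x^{-2},\,c^{1/2}yx^{-(g+1)})$ up to constants. What your version buys is a single parity-independent lemma and a transparent origin for the square (two isomorphic quotient factors) in place of a separately verified automorphism. Two small remarks: the differential-form criterion alone suffices, so your appeal to Kani--Rosen is redundant, though valid, since $D_1$ and $D_2$ are the quotients by the commuting involutions $(x,y)\mapsto(-x,y)$ and $(x,y)\mapsto(-x,-y)$ whose product is the hyperelliptic involution; and you were right to make the product exponent depend on $i$ --- the exponent $\frac{g-(2^{k-1}-1)}{2^{k-1}}+2$ in the theorem's displayed formula is constant in $i$ and is evidently a typo, while your $\frac{g-(2^{k-i}-1)}{2^{k-i}}+2=2^im+1$ gives precisely the degrees $2m+1,\dots,2^km+1$ that arise in the paper's own induction.
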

Theorem \ref{splittheorem}  breaks down the Jacobian of a curve into the Jacobians of lower genus curves. We break these Jacobians down even further in Section \ref{sec:breakdownjacobian}. In some cases, we can then use known results for lower genus curves (see, for example, \cite{Barnet2011, Fite2012, Fite2016}) to immediately determine the identity component of the Sato-Tate group.  Also, note that Theorems \ref{genus4} and \ref{genus5} follow as corollaries to Theorem \ref{splittheorem}.

In Section  \ref{sec:altmethod}, we describe a new algorithm that computes the identity component of the Sato-Tate group of the Jacobian of hyperelliptic curves $C_1, C_2,$ and $C_3$ mentioned above. 
\begin{theorem}\label{thm:alg}
Algorithm \ref{algorithm} gives the identity component of the Sato-Tate group of the Jacobian of curves of the form 
$$C_1\colon y^2=x^{2g+2}+c, \quad C_2\colon y^2=x^{2g+1}+cx, \quad C_3\colon y^2=x^{2g+1} +c.$$ 
\end{theorem}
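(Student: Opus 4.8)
The plan is to verify that the successive steps of Algorithm~\ref{algorithm} compute invariants of $\Jac(C)$ that together determine the connected compact group $\ST^0(C)$. The conceptual starting point is that every curve in the families $C_1, C_2, C_3$ is a cyclic cover of $\mathbb{P}^1$: the automorphism $x \mapsto \zeta x$, for an appropriate root of unity $\zeta$, endows $\Jac(C)$ with an action of a cyclotomic ring, so that over $\overline{\mathbb{Q}}$ the Jacobian is isogenous to a product of abelian varieties with complex multiplication. Consequently $\ST^0(C)$ is a compact torus, and the whole problem reduces to determining its rank and its embedding into $\USp(2g)$, i.e.\ the multiplicities $d$ with which circle factors $\U(1)_d$ occur. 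First I would make this reduction precise by invoking the standard description of the identity component of the Sato-Tate group as the maximal compact form of the connected component of the Mumford--Tate group, which for an abelian variety of CM type is abelian.

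Second, I would set up the dictionary between this torus and the arithmetic input produced by the point-count formulas of the appendices. The action of $x \mapsto \zeta x$ decomposes the space of regular differentials, and hence the Tate module, into character eigenspaces indexed by the nontrivial characters $\chi$ of the cyclic group; on each eigenspace the Frobenius eigenvalue at a good prime $p$ that splits completely in the relevant cyclotomic field is, up to a power of $p$, a Jacobi sum $J(\chi^a,\chi^b)$. The appendix formulas expressing $\#\overline{C}(\mathbb{F}_p)$ in terms of Jacobi sums are precisely what make these eigenvalues computable. Grouping the characters into orbits under $\Gal(\overline{\mathbb{Q}}/\mathbb{Q})$ then partitions the CM pieces into simple factors, and the size of each orbit, together with the CM field it cuts out, contributes one circle factor $\U(1)_d$ to the torus. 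I would show that the bookkeeping performed by Algorithm~\ref{algorithm} is exactly this: read off the Jacobi-sum data, form the Galois orbits of characters, and output the corresponding product of $\U(1)_d$'s.

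Third, the main obstacle is to prove that the torus $T$ read off by the algorithm is exactly $\ST^0(C)$, neither a proper subtorus nor an overgroup. The containment $\ST^0(C) \subseteq T$ is immediate, since every normalized Frobenius eigenvalue satisfies the evident character relations defining $T$. For the reverse I would use that for an abelian variety of CM type the Sato-Tate group is known explicitly as a form of the Serre torus attached to the CM type, with the associated equidistribution holding unconditionally, so that $\ST^0(C)$ is the connected component of this Serre torus. The delicate remaining point is to match its rank with the number of $\U(1)$ factors the algorithm outputs, which amounts to determining the multiplicative relations among the Jacobi sums of the appendix formulas. Equivalently, I must show that the only relations are those imposed by complex conjugation $\chi \leftrightarrow \bar\chi$, i.e.\ by the symplectic pairing, and by the Galois action on characters, with no accidental coincidences across distinct Galois orbits. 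Establishing this independence, which is governed by the reflex field of the CM type, is where the real work lies and where I expect to spend most of the effort, treating the three families $C_1, C_2, C_3$ in parallel via the isogeny relations of Theorem~\ref{splittheorem}.
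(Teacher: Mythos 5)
Your first two paragraphs track the paper's setup faithfully: the Jacobians are of CM type, $\ST^0(C)$ is a torus dual to the maximal torsion-free quotient of the group generated by the Grossencharacters in the Tate module, and the appendix point counts identify these characters as Jacobi sums $J(\chi^m,\phi)$. But the key lemma on which your third paragraph rests --- that \emph{the only multiplicative relations among these characters are those forced by complex conjugation $\chi\leftrightarrow\bar\chi$ and the Galois action, with no accidental coincidences across distinct Galois orbits} --- is false for precisely these families, so the step where you plan to spend most of your effort cannot be carried out. The paper's own data refutes it: for $y^2=x^9+c$ the Jacobian splits over $\overline{\mathbb Q}$ as $E\times A$ with $E$ a CM elliptic curve and $A$ an absolutely simple CM threefold, corresponding to the two Galois orbits $\{3,6\}$ and $\{1,2,4,5,7,8\}$ of the characters $T_9^m$. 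Your recipe (one circle per conjugate pair within each orbit) would output a rank-$4$ torus $\U(1)\times\U(1)\times\U(1)\times\U(1)$, whereas Theorem \ref{thm:x9b} gives $\ST^0(C)=\U(1)\times\U(1)\times\U(1)$, of rank $3$: in the notation of that proof there is a genuine cross-orbit relation ($\chi_4=\chi_3^{-1}$, tying a character of the threefold to a character of the elliptic curve). This is exactly the phenomenon flagged in the paper's remark that $\ST^0(A)\times\ST^0(E)\neq\ST^0(A\times E)$ even though $A$ and $E$ share no common $\overline{\mathbb Q}$-isogeny factor. Relatedly, your description of the algorithm's bookkeeping as ``form the Galois orbits and output the corresponding product of $\U(1)_d$'s'' misstates what Algorithm \ref{algorithm} actually does.

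The paper's proof (assembled from Sections \ref{sec:altmethod1} and \ref{sec:stickelberger} together with Theorems \ref{theorem:pointcountgeneral} and \ref{theorem:pointcount2}) does not attempt to rule out extra relations a priori; it \emph{computes} the full relation lattice. Stickelberger's congruence (Theorem \ref{thm:stickelberger}, Corollary \ref{cor:stickelberger}) makes the $\mathfrak{B}$-adic valuations of the Jacobi sums explicitly computable: under each embedding of the character group into the circle, $\hbox{ord}_\mathfrak{B} J(\chi^m,\phi)$ is $1$ or $0$ according to whether the two character angles sum past $2\pi$ (a carry in the group of characters). The decisive lemma is then that the kernel of the resulting valuation map consists exactly of the torsion characters: a product of these Grossencharacters with all $p$-adic and $\ell$-adic valuations zero and, by Weil's Riemann hypothesis plus the product formula, absolute value one at all infinite places, takes root-of-unity values at all split Frobenii and hence has finite order. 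Consequently the kernel of the valuation matrix contains the lattice of genuine relations with finite index --- which is also why step (6) of the algorithm individually tests each kernel generator to distinguish an honest relation from a relation holding only up to a finite-order character, a subtlety absent from your outline. To repair your proposal, replace the independence lemma with this valuation computation; any attempt to prove ``no cross-orbit relations'' already fails at genus $4$.
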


Using  Theorem \ref{thm:alg}  we prove the following (see Theorem \ref{thm:x9b}) which confirms an unpublished result of Zywina \cite{Zywina}. 
\begin{theorem}\label{thm:x9}
The identity component of the Sato-Tate group of the Jacobian of the hyperelliptic curve $y^2=x^{9}+c$ is $\U(1) \times \U(1) \times \U(1).$
\end{theorem}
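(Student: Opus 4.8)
The plan is to invoke Algorithm \ref{algorithm}, whose correctness is guaranteed by Theorem \ref{thm:alg}, and to carry out its steps explicitly for the genus $4$ curve $C: y^2 = x^9 + c$ (a curve of type $C_3$ with $2g+1 = 9$). First I would record the cyclic automorphism $\sigma:(x,y)\mapsto(\zeta_9 x, y)$ of order $9$ and compute its action on the basis $\omega_i = x^i\,dx/y$, $0\le i\le 3$, of holomorphic differentials: since $\sigma^*\omega_i = \zeta_9^{\,i+1}\omega_i$, the automorphism acts on $H^0(C,\Omega^1)$ through the characters $\sigma\mapsto \zeta_9^{\,j}$ for $j\in\{1,2,3,4\}$. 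This exhibits $\mathbb{Q}(\zeta_9)$ acting on $\Jac(C)$ and sets up the Jacobi-sum description of the Frobenius eigenvalues established in the appendices.

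Next I would sort the characters into $\Gal(\mathbb{Q}(\zeta_9)/\mathbb{Q})$-orbits. The eigenvalues $\zeta_9^{1},\zeta_9^{2},\zeta_9^{4}$ are primitive $9$th roots and form a single orbit, producing a simple abelian threefold $B$ with complex multiplication by $\mathbb{Q}(\zeta_9)$ and primitive CM type $\{1,2,4\}$; the remaining eigenvalue $\zeta_9^{3}=\zeta_3$ produces an elliptic curve $E$ with CM by $\mathbb{Q}(\zeta_3)$. Thus $\Jac(C)\sim B\times E$ over $\overline{\mathbb{Q}}$, and because every factor has complex multiplication, $\ST^0(C)$ is a compact subtorus of the diagonal maximal torus of $\USp(8)$; it then remains only to compute its rank, which equals the number of $\U(1)$ factors.

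The heart of the argument is this rank computation, which the algorithm performs through the multiplicative relations among the relevant Jacobi sums, equivalently through the lattice generated by the Galois translates of the Hodge cocharacter. Writing the cocharacter of $B$ as the ``window'' $\mu = e_1+e_2+e_4$ inside $\mathbb{Z}[(\mathbb{Z}/9)^{\ast}]$ and applying the six elements of $(\mathbb{Z}/9)^{\ast}$, one finds that the translates span a rank-$4$ sublattice, with exactly two independent relations among the six windows. I would then verify the decisive point: both of these relations also annihilate the $\mathbb{Q}(\zeta_3)$-cocharacter of $E$, so the elliptic factor contributes no new circle and the combined cocharacter lattice of $B\times E$ again has rank $4$. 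Hence the Mumford–Tate torus has rank $4$, the associated Sato–Tate torus has rank $4-1=3$, and $\ST^0(C)\cong \U(1)\times\U(1)\times\U(1)$, with the circle attached to $E$ realized as a fixed product of the three circles acting on $B$.

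The main obstacle is precisely this last verification: a priori the three orbit-circles of $B$ together with the circle of $E$ suggest a rank-$4$ torus $\U(1)^4$, and one must show that the $\zeta_3$-character is linearly dependent on the $\zeta_9$-characters so that the true rank drops to $3$. Establishing this cleanly requires the explicit Jacobi-sum formulas proved in the appendices, which pin down the multiplicative relations among the Frobenius eigenvalues, together with the cocharacter bookkeeping above; once the two relations are seen to kill the $E$-coordinate, the identification $\ST^0(C)=\U(1)\times\U(1)\times\U(1)$ follows.
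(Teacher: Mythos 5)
Your proposal is correct and follows essentially the same route as the paper: it runs Algorithm \ref{algorithm}, and your cocharacter-window bookkeeping (the six Galois translates of $\mu=e_1+e_2+e_4$ in $\mathbb{Z}[(\mathbb{Z}/9)^{*}]$ spanning a rank-$4$ lattice, with two relations that also annihilate the $\zeta_3$-coordinate) is precisely the paper's $6\times 8$ Stickelberger-valuation matrix over $\mathbb{F}_{19}$ together with its two-dimensional kernel, which forces the elliptic factor's character to be a product of the threefold's characters and yields $\ST^0(C)=\U(1)\times\U(1)\times\U(1)$. The only cosmetic difference is that you make the splitting $\Jac(C)\sim B\times E$ and the Hodge-theoretic interpretation explicit (which the paper relegates to a remark citing Shioda and to its comment that the rows are Hodge circles), whereas the paper carries out the identical rank computation concretely at $p=19$.
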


\begin{remark}
Shioda studies the Hodge group of curves of the form $y^2=x^m-1$  in \cite[Sections 5, 6]{Shioda82}.  In particular, Shioda shows that the Jacobian of the curve $y^2=x^9-1$ satisfies the Hodge conjecture and is a 4-dimensional abelian variety \cite[Example 6.1]{Shioda82}.  Indeed, he remarks that the Jacobian  is isogenous to the product of a CM elliptic curve $E$ and a 3-dimensional absolutely simple CM abelian variety.  The elliptic curve $E$ has $\ST^0(E)\simeq\U(1)$ and the abelian variety $A$ has $\ST^0(A) \simeq \U(1) \times \U(1) \times \U(1)$.  Thus $\ST^0(A) \times \ST^0(E) \neq \ST^0( A \times E)$, even though $A$ and $E$ do not share any common factor up to $\overline{\mathbb{Q}}$-isogeny.
\end{remark}
We also use Theorem  \ref{thm:alg} to compute $\ST^0(C_1)$, $\ST^0(C_2)$, and $\ST^0(C_3)$ for genus 2 through 10, and find surprising patterns in the shapes of the identity components for these families of curves. Following these computations, we form several conjectures (see Section \ref{sec:highergenusex}).

The remainder of this paper is organized as follows. In Section \ref{sec:background} we give some necessary background information that will be used throughout the paper.   In Section \ref{sec:genus45proof} we prove Theorems \ref{genus4} and \ref{genus5}, and in Section \ref{sec:proofsplttheorem} we prove Theorem \ref{splittheorem}. In Section \ref{sec:breakdownjacobian} we work to break down the Jacobians that appear in Theorem \ref{splittheorem} so that we can potentially use known results for the Sato-Tate groups of lower genus curves to determine the identity components of the Sato-Tate groups of higher genus curves. In Section \ref{sec:altmethod} we discuss an algorithm for computing the identity components of the Sato-Tate group.  In Section  \ref{sec:workedexamples}, we prove Theorem \ref{thm:x9} and provide an alternate proof of Theorem \ref{genus4} using this method. This algorithm requires an explicit formula for the number of points on the curve over $\mathbb F_p$ in terms of Jacobi sums, which we prove in Appendices \ref{sec:pointcount} and  \ref{sec:pointcount2}.

\section{Background}
\label{sec:background}

For the Jacobian of  a genus $g$ curve, the Sato-Tate group will be a compact subgroup of   $\USp(2g)$, which is the group of $2g\times 2g$ complex unitary matrices preserving a fixed symplectic form. In what follows, we describe the possible forms of the identity components of the Sato-Tate groups.

Let $u\in \U(1) :=\{e^{i\theta}: \theta \in [0,2\pi)\}$.  We then  define the following subgroups of $\USp(2n)$
$$\U(1)_n:=\left\langle \diag(u, \overline u, \ldots, u,\overline u): u\in \U(1)\right\rangle$$
and 
$$\U(1)^{n}:=\left\langle \diag( u_1,\overline{u_1},\ldots, u_{n},\overline{u_{n}}): u_i\in \U(1)\right\rangle.$$

As we will see in later sections, the identity components of the Sato-Tate groups we study will be products of these groups.\\

We use the following theorem of Kani and Rosen, specified to suit our problem, to express the Jacobian of a curve $C$ into the product of Jacobians of curves of smaller genus. 

\begin{theorem}\cite[Theorem C]{KaniRosen1989}\label{theorem:kanirosen}
Let $k$ be a positive integer. Let $C$ be a curve of genus $g$ and let $\alpha_i$ be an element of the automorphism group of $C$, for $i =1,\dots, k$. Suppose that
\begin{enumerate}
\item $\langle\alpha_i\rangle \cdot \langle\alpha_j\rangle = \langle\alpha_j\rangle \cdot \langle\alpha_i\rangle$, for $i,j =1,\dots,k$;
\item $g=  g_1 + \dots + g_k$, where $g_i$ is the genus of the curve $C/\langle \alpha_i \rangle$, for $i =1,\dots,k$; and
\item the genus of the curve $C/\langle \alpha_i, \alpha_j \rangle$ is 0 for all $1 \leq i \not = j \leq k$.
\end{enumerate}
Then, we have the $\overline{\mathbb Q}$-isogeny
$$\Jac(C)\sim \Jac(C/\langle \alpha_1 \rangle)\times \cdots \times \Jac(C/\langle \alpha_k \rangle).$$
\end{theorem}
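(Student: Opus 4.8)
The plan is to pass to the endomorphism algebra $\End^0(\Jac(C)) := \End(\Jac(C)) \otimes \mathbb{Q}$ and prove the isogeny by producing a relation among idempotents. To each finite subgroup $H \leq \mathrm{Aut}(C)$ I attach the averaging element
$$\pi_H := \frac{1}{|H|}\sum_{h \in H} h \in \End^0(\Jac(C)),$$
where each $h$ acts on $\Jac(C)$ by functoriality. A one-line computation (for fixed $h$, the map $h' \mapsto hh'$ permutes $H$) shows $\pi_H^2 = \pi_H$, so $\pi_H$ is an idempotent, and the first thing to establish is that its image is isogenous to $\Jac(C/H)$ with $\dim(\mathrm{im}\,\pi_H) = g(C/H)$.

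Granting this, the argument proceeds in four steps. First, writing $\pi_i := \pi_{\langle\alpha_i\rangle}$, hypothesis (1) guarantees that $\langle\alpha_i\rangle\langle\alpha_j\rangle = \langle\alpha_i,\alpha_j\rangle$ is a subgroup, and a short group-ring calculation using $|\langle\alpha_i\rangle\langle\alpha_j\rangle| = |\langle\alpha_i\rangle|\,|\langle\alpha_j\rangle|/|\langle\alpha_i\rangle\cap\langle\alpha_j\rangle|$ gives $\pi_i\pi_j = \pi_{\langle\alpha_i,\alpha_j\rangle}$. Second, hypothesis (3) says this last idempotent has image of dimension $g(C/\langle\alpha_i,\alpha_j\rangle) = 0$; since an idempotent of $\End^0(\Jac(C))$ with zero image is itself zero, we conclude $\pi_i\pi_j = 0$ for $i \neq j$, i.e. the $\pi_i$ are pairwise orthogonal. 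Third, orthogonality makes $e := \pi_1 + \cdots + \pi_k$ an idempotent whose image is isogenous to the product $\Jac(C/\langle\alpha_1\rangle) \times \cdots \times \Jac(C/\langle\alpha_k\rangle)$, of dimension $g_1 + \cdots + g_k$. Fourth, hypothesis (2) forces this dimension to equal $g = \dim\Jac(C)$; since an idempotent with full-dimensional image must equal the identity (its complement $1 - e$ then has image of dimension $0$, hence vanishes), the image of $e$ is all of $\Jac(C)$ up to isogeny, which is exactly the claimed decomposition.

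I expect the main obstacle to be the foundational claim that $\mathrm{im}\,\pi_H \sim \Jac(C/H)$ with the correct dimension. The natural route is through the quotient map $\phi \colon C \to C/H$, which induces a pullback $\phi^* \colon \Jac(C/H) \to \Jac(C)$ and a pushforward (norm) $\phi_* \colon \Jac(C) \to \Jac(C/H)$ satisfying $\phi_* \circ \phi^* = [\,|H|\,]$; this shows $\phi^*$ has finite kernel, so $\mathrm{im}\,\phi^* \sim \Jac(C/H)$ and has dimension $g(C/H)$. The remaining point is to identify $\mathrm{im}\,\phi^*$ with $\mathrm{im}\,\pi_H$, which I would do by comparing their actions on the cotangent space $H^0(C, \Omega^1)$: the averaging operator $\pi_H$ projects onto the $H$-invariant differentials $H^0(C,\Omega^1)^H \cong H^0(C/H, \Omega^1)$, matching the image of $\phi^*$. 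Once these identifications are in place, the idempotent bookkeeping above is routine.
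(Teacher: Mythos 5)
Your proposal is correct, but there is no proof in the paper to compare it against: the paper imports this statement verbatim from \cite{KaniRosen1989} and cites it without proof. The meaningful comparison is with the cited source, and there your argument is essentially Kani--Rosen's own: their paper is titled ``Idempotent relations and factors of Jacobians,'' and Theorem C is deduced there from a general principle that any relation among the averaging idempotents $\varepsilon_H=\frac{1}{|H|}\sum_{h\in H}h$ in $\mathbb{Q}[\mathrm{Aut}(C)]$ induces a corresponding isogeny of Jacobians; you have in effect re-derived the needed special case by checking directly that hypotheses (1)--(3) make $\pi_1,\dots,\pi_k$ pairwise orthogonal idempotents summing to $1$ in $\End^0(\Jac(C))$. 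Two points would tighten your sketch. First, for the foundational claim the cleanest route is the companion identity to the one you state: besides $\phi_*\circ\phi^*=[\,|H|\,]$ on $\Jac(C/H)$ one has $\phi^*\circ\phi_*=\sum_{h\in H}h$ on $\Jac(C)$ (pulling back the pushforward of a divisor yields the sum of its $H$-translates), which gives $\mathrm{im}\,\pi_H=\mathrm{im}\,\phi^*\sim\Jac(C/H)$ at once, with no need to argue that an abelian subvariety is determined by its cotangent space --- though your cotangent argument does work in characteristic zero, where $H^0(C,\Omega^1)^H\cong H^0(C/H,\Omega^1)$. Second, you tacitly assume each $\langle\alpha_i\rangle$ (hence each $\langle\alpha_i,\alpha_j\rangle=\langle\alpha_i\rangle\langle\alpha_j\rangle$, by condition (1)) is finite; without this neither the quotient curves nor the averaging operators make sense. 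This is harmless here, since in all of the paper's applications $g\geq 2$ forces $\mathrm{Aut}(C)$ to be finite, but it should be said. Your treatment of the degenerate steps (an idempotent with zero-dimensional image vanishes after clearing denominators, because the image of an abelian variety under a homomorphism is a connected abelian subvariety; complementary idempotents give $\Jac(C)\sim\mathrm{im}(e)\times\mathrm{im}(1-e)$) is correct.
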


\subsection{Gauss and Jacobi Sums}\label{sec:gaussjacobi}

Let  $p$ be a prime and $\mathbb{F}_q$ be a finite field with $q=p^f$ elements. We define the standard trace map $\Tr:\mathbb F_q\rightarrow \mathbb F_p$ by 
$$\Tr(x)=x+x^p+\cdots + x^{p^{f-1}}.$$
Let  $\zeta_p=e^{2\pi i/p}$ be a $p^\text{th}$ root of unity. Then for $\chi \in\widehat{\mathbb F_q^{\times}}$ we define the Gauss sum $g(\chi)$ to be
\begin{equation}\label{eqn:gausssum}
    g(\chi):=\sum_{x\in\mathbb F_q} \chi(x)\zeta_p^{\Tr(x)},
\end{equation}
where we extend $\chi$ to all of $\mathbb F_q$ by setting $\chi(0)=0$ (see, for example, \cite[Chapter 8]{IR90}). Note that $g(\epsilon)=-1$, where $\epsilon$ is the trivial character. If $\chi$ is nontrivial and if $\overline\chi$ denotes its inverse, then $g(\chi)g(\overline\chi)=\chi(-1)q$.

Let $\theta\colon  \mathbb F_p\rightarrow \mathbb C$ be the additive character defined by $\theta(x)=\zeta_p^x$, so that $g(\chi):=\sum_{x\in\mathbb F_p} \chi(x)\theta(x)$. We will make use of the following identity from \cite{Fuselier10}.

\begin{lemma}\cite[Lemma 2.2]{Fuselier10}\label{lemma:charsum}
Let $\alpha\in\mathbb F_p^\times$. Then
$$\theta(\alpha)=\frac{1}{p-1}\sum_{i=0}^{p-2}G_{-i}T^i(\alpha),$$
where $T$ is a fixed generator for the character group and $G_{-i}$ is the Gauss sum $g(T^{-i})$.
\end{lemma}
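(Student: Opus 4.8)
The plan is to prove this by Fourier inversion on the multiplicative group $\mathbb F_p^\times$: the quantities $G_{-i}=g(T^{-i})$ are precisely the Fourier coefficients of the restriction of $\theta$ to $\mathbb F_p^\times$ with respect to the basis of characters $\{T^i\}_{i=0}^{p-2}$, so the asserted identity is just the corresponding inversion statement. Concretely, I would start from the right-hand side, substitute the definition $G_{-i}=\sum_{x\in\mathbb F_p}T^{-i}(x)\theta(x)$ of each Gauss sum, and interchange the two finite sums.

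After interchanging, the right-hand side becomes $\frac{1}{p-1}\sum_{x\in\mathbb F_p}\theta(x)\sum_{i=0}^{p-2}T^{-i}(x)T^i(\alpha)$. Since $\alpha\in\mathbb F_p^\times$, for $x\in\mathbb F_p^\times$ I can rewrite the inner summand as $T^i(\alpha x^{-1})$, and because $T$ generates the cyclic character group $\widehat{\mathbb F_p^\times}$ of order $p-1$, the family $\{T^i\}_{i=0}^{p-2}$ runs over all multiplicative characters exactly once. Column orthogonality of characters then gives $\sum_{i=0}^{p-2}T^i(\alpha x^{-1})=p-1$ when $x=\alpha$ and $0$ otherwise, so only the term $x=\alpha$ survives. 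It contributes $\frac{1}{p-1}\,\theta(\alpha)\,(p-1)=\theta(\alpha)$, which is exactly the claim.

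The only point requiring care --- and the closest thing to an obstacle --- is the bookkeeping of conventions at $x=0$ and for the trivial character. With the standard convention $\epsilon(0)=0$, which is precisely what makes $g(\epsilon)=\sum_{x\neq 0}\zeta_p^x=-1$ as recorded above, every $T^{-i}$ vanishes at $x=0$; hence the $x=0$ term drops out of the interchanged sum and the reduction to a sum over $\mathbb F_p^\times$ is legitimate. I expect no deeper difficulty beyond this, since the argument is a clean application of orthogonality on $\mathbb F_p^\times$.
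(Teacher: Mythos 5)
Your proof is correct: substituting the definition of $G_{-i}$, interchanging the finite sums, and applying orthogonality of the characters $\{T^i\}_{i=0}^{p-2}$ on $\mathbb F_p^\times$ — with the convention $T^{-i}(0)=0$ (the same convention that makes $g(\epsilon)=-1$) disposing of the $x=0$ term — gives the identity exactly as claimed. The paper itself offers no proof, citing \cite{Fuselier10} instead, and your Fourier-inversion argument is essentially the standard proof found in that source, so there is nothing to add.
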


For two multiplicative characters $A,B$ over $\mathbb F_p$, we define their Jacobi sum by
$$J(A,B)=\sum_{x\in \mathbb F_q} A(x)B(1-x).$$
We have the following connection between Gauss sums and Jacobi sums (see, for example, \cite[Chapter 2]{BerndtEvansWilliams}). For nontrivial characters $A$ and $B$ over $\mathbb F_q$ whose product is also nontrivial, we have 
\begin{equation}\label{eqn:jacobigauss}J(A,B)=\frac{g(A)g(B)}{g(AB)}.\end{equation}
On the other hand, if $\phi$ is a quadratic character then $J(\phi,\phi)=-\phi(-1)$.

\section{Proofs of Theorems \ref{genus4} and \ref{genus5}}\label{sec:genus45proof}

\subsection{The Curve $y^2=x^{10} +c$.} 
\begin{theorem}\label{genus4b}
The identity component of the Sato-Tate group of the Jacobian of the hyperelliptic curve $y^2=x^{10}+c$ is $\U(1)_2 \times \U(1)_2.$
\end{theorem}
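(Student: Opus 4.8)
The plan is to split $\Jac(C)$ for $C:y^2=x^{10}+c$ into a product of Jacobians of two genus-$2$ curves using the specialization of Kani--Rosen recorded in Theorem \ref{theorem:kanirosen}, to identify the resulting factor up to $\overline{\mathbb{Q}}$-isogeny, and then to compute the identity component of its Sato-Tate group directly. First I would exhibit two commuting order-$2$ automorphisms of $C$, namely $\alpha_1:(x,y)\mapsto(-x,y)$ and $\alpha_2:(x,y)\mapsto(-x,-y)$, whose product is the hyperelliptic involution, so that $\langle\alpha_1,\alpha_2\rangle$ is a Klein four-group. Passing to the invariants $u=x^2$ and $v=xy$, the quotients are
$$D_1:=C/\langle\alpha_1\rangle:\ y^2=u^{5}+c, \qquad D_2:=C/\langle\alpha_2\rangle:\ v^2=u^{6}+cu,$$
both of genus $2$, while $C/\langle\alpha_1,\alpha_2\rangle$ has function field $\mathbb{C}(u)$ and hence genus $0$. (I would in fact record the general even-$g$ version first: for $C:y^2=x^{2g+2}+c$ the same two automorphisms give the genus-$g/2$ quotients $y^2=u^{g+1}+c$ and $v^2=u^{g+2}+cu$.) Since $g=2+2$, all three hypotheses of Theorem \ref{theorem:kanirosen} hold, yielding the isogeny $\Jac(C)\sim \Jac(D_1)\times \Jac(D_2)$.

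Next I would observe that both factors are isomorphic over $\overline{\mathbb{Q}}$ to $y^2=x^5+1$: for $D_1$ one simply rescales $x$ and $y$, and for $D_2$ one moves the Weierstrass point $u=0$ to infinity via $u\mapsto 1/u$ and then rescales. Hence $\Jac(D_1)\sim\Jac(D_2)$ and $\Jac(C)\sim A^2$ for $A:=\Jac(y^2=x^5+1)$, so that everything reduces to determining $\ST^0(A)$ and then accounting for the square. The curve $A$ has complex multiplication by $\mathbb{Q}(\zeta_5)$; reading off the action of $\zeta_5$ on the regular differentials $dx/y$ and $x\,dx/y$ gives the CM type $\{1,2\}\subset(\mathbb{Z}/5)^{\times}$. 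A short computation then shows that the $\mathbb{Z}[\Gal(\mathbb{Q}(\zeta_5)/\mathbb{Q})]$-module generated by the associated Hodge cocharacter has rank $3$, so the Mumford--Tate torus is $3$-dimensional and $\ST^0(A)$ is the $2$-dimensional subtorus $\{\mathrm{diag}(\U(u),\U(v))\}=\U(1)\times\U(1)$ of $\USp(4)$; alternatively this is exactly the abelian-surface classification of \cite{Fite2012}.

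Finally I would combine the pieces. Because $\Jac(C)\sim A^2$, the Sato-Tate conjugacy classes of $\Jac(C)$ are the diagonal images of those of $A$, so $\ST^0(\Jac(C))$ is the diagonal embedding of $\U(1)\times\U(1)$ into $\USp(8)$, that is $\{\mathrm{diag}(\U(u),\U(v),\U(u),\U(v))\}$. A symplectic permutation of the four $2\times2$ blocks conjugates this to $\{\mathrm{diag}(\U(u),\U(u),\U(v),\U(v))\}=\U(1)_2\times\U(1)_2$, which is the asserted identity component.

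I expect the genuine difficulty to lie in the last two steps rather than in the automorphism bookkeeping. It is tempting (but wrong) to guess $\ST^0(A)=\U(1)$ for the absolutely simple CM surface $A$; pinning it down as the $2$-dimensional torus $\U(1)\times\U(1)$ is the crux. The second subtlety is precisely the product-versus-diagonal phenomenon flagged in the Remark: one must \emph{not} assert $\ST^0(\Jac(C))=\ST^0(\Jac D_1)\times\ST^0(\Jac D_2)$, but instead use that $D_1$ and $D_2$ are $\overline{\mathbb{Q}}$-isogenous so that the Sato-Tate group collapses onto the diagonal before matching it to $\U(1)_2\times\U(1)_2$.
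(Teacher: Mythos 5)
Your proposal is correct and reaches the right answer, but it takes a genuinely different route through each stage, so it is worth comparing. The paper applies Theorem \ref{theorem:kanirosen} to the two inversion automorphisms $\alpha,\beta\colon(x,y)\mapsto\bigl(c^{1/(g+1)}x^{-1},\pm y/x^{g+1}\bigr)$ (checking genera via Hurwitz), deduces $\Jac(C)\sim\Jac(C/\langle\alpha\rangle)^2$ from the isomorphism $C/\langle\alpha\rangle\cong C/\langle\beta\rangle$, and only then brings in the quotient map $(x,y)\mapsto(x^2,y)$ to $C'\colon y^2=x^5+c$; to reconcile the two decompositions it must invoke the simplicity of $\Jac(C')$, via $\End(\Jac(C'_{\overline{\mathbb Q}}))_{\mathbb Q}\simeq\mathbb Q(\zeta_5)$, to force $\Jac(C)\sim\Jac(C')^2$. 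You instead run Kani--Rosen on the rational involutions $(x,y)\mapsto(\pm x,\pm y)$, whose quotients $y^2=u^5+c$ and $v^2=u^6+cu$ are explicit, and you get the square structure from the elementary $\overline{\mathbb Q}$-isomorphism of both quotients with $y^2=x^5+1$; this avoids both the Hurwitz bookkeeping for the inversions and the endomorphism-algebra/simplicity argument, at the cost of one change of variables, and your splitting is consistent with the paper's Lemma \ref{evencase}, which proves the same decomposition by pulling back differentials instead. What the paper's route buys is that the square structure $\Jac(C)\sim\Jac(C/\langle\alpha\rangle)^2$ comes out uniformly for all even $g$ without identifying the quotient; what yours buys is explicitness and, more importantly, a correct and self-contained treatment of the final step. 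There the paper simply cites \cite{Fite2012} and writes $\ST^0(C')=\U(1)_2$, which, read against the paper's own definition of $\U(1)_2$ as a one-dimensional torus, is a misstatement: the classification of \cite{Fite2012} assigns the quartic-CM Jacobian of $y^2=x^5+c$ (real endomorphism algebra $\mathbb{C}\times\mathbb{C}$) the two-dimensional identity component $\U(1)\times\U(1)$, exactly as your Mumford--Tate computation from the CM type $\{1,2\}$ (rank-$3$ torus, hence $2$-dimensional Hodge group) shows. Your explicit diagonal-versus-product step for the square $A^2$, conjugating $\{\mathrm{diag}(\U(u),\U(v),\U(u),\U(v))\}$ to $\U(1)_2\times\U(1)_2$ inside $\USp(8)$, is precisely what the paper leaves implicit and is what makes its (correct) final answer come out; your closing remark rightly identifies this, together with ruling out the naive guess $\ST^0(A)=\U(1)$, as the actual crux.
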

\begin{proof}  Consider the genus $g=4$ curve $C\colon y^2=x^{10} +c$. We decompose the Jacobian of our curve $C$ via suitable automorphisms  in such a way to apply Theorem \ref{theorem:kanirosen} effectively.  We let $\alpha, \beta\colon  C \to C$ be the following automorphisms of $C$
$$\alpha(x,y) = \left(c^{1/5}x^{-1}, c^{1/2}\frac{y}{x^{5}}\right),$$
and
$$\beta(x,y) = \left(c^{1/5}x^{-1}, -c^{1/2}\frac{y}{x^{5}}\right).$$
We verify the conditions of Theorem \ref{theorem:kanirosen} for $\alpha$ and $\beta$. We first find that 
\begin{equation}\label{commeq}
\alpha \beta(x,y) = \beta\alpha(x,y) = (x,-y).
\end{equation}
Via the Hurwitz genus formula, one has $g_\alpha = g_\beta = g/2$, where $g_\alpha$ and $g_\beta$ are the genuses of the curves $C/\langle\alpha\rangle$ and $C/\langle\beta\rangle$ respectively. One similarly verifies that $g_{\alpha, \beta} = 0$, where $g_{\alpha, \beta}$ denotes the genus of the curve $C/\langle\alpha, \beta\rangle$, so that all the conditions of Theorem \ref{theorem:kanirosen} are verified. We thus have the $\overline{\mathbb Q}$-isogeny
\begin{equation}\label{splitg4}
\Jac(C)\sim \Jac(C/\langle \alpha \rangle)\times \Jac(C/\langle \beta \rangle) \sim \Jac(C/\langle \alpha \rangle)^2,
\end{equation}
where the second isogeny holds via the isomorphism $C/\langle \alpha \rangle \to C/\langle \beta \rangle$ given by $(x,y) \mapsto (-x,y)$.  Thus, $\Jac(C)$ is isogenous to the square of an abelian variety. 

Now note that $\phi\colon (x,y)\mapsto(x^2,y)$ is a map from $C$ to the curve $C'\colon  y^2 = x^{5}+c$, so that we have
$$\Jac(C)\sim \Jac(C')\times A$$
for some abelian variety $A$ of dimension $2$.  By Equation \eqref{splitg4} we know that $\Jac(C)$ is isogenous to the square of an abelian variety. Since $\End(\Jac(C'_{\overline{\mathbb Q}}))_\mathbb Q \simeq\mathbb Q(\zeta_5)$ we have that $\Jac(C')$ is simple and we must therefore have that 
$$\Jac(C)\sim \Jac(C')^2.$$
It is shown in \cite{Fite2012} that  the identity component of the Sato-Tate group of $\Jac(C')$ is
$$\ST^0(C')= \U(1)\times \U(1),$$
which in turn concludes the proof that 
$$\ST^0(C) = \U(1)_2 \times \U(1)_2.$$
\end{proof}

\subsection{The Curve $C\colon  y^2 = x^{12}+c$.} 
\begin{theorem}\label{thm:genus5b}
The identity component of the Sato-Tate group of the Jacobian of the hyperelliptic curve $y^2=x^{12}+c$ is $\U(1)_2 \times \U(1)_3.$
\end{theorem}
\begin{proof}
Consider the genus $g=5$ curve $C\colon  y^2=x^{2g+2} +c$. As in the proof of Theorem \ref{genus4b}, we let $\alpha, \beta\colon  C \to C$ be the following automorphisms of $C$
$$\alpha(x,y) = \left(c^{1/6}x^{-1}, c^{1/2}\frac{y}{x^{6}}\right),$$
and
$$\beta(x,y) = \left(c^{1/6}x^{-1}, -c^{1/2}\frac{y}{x^{6}}\right).$$
However, in order to apply Theorem \ref{theorem:kanirosen} effectively, we require an additional automorphism of $C$. Namely, we let $\gamma\colon  C \to C$ be defined by
$$\gamma(x,y) = (\zeta_{3}x,y),$$
where $\zeta_{3}$ is a primitive 3rd root of unity. We may now check the conditions of Theorem \ref{theorem:kanirosen} for the automorphisms $\alpha, \beta$ and $\gamma$.  We first find that 
\begin{equation}\label{commeq2}
\alpha \beta(x,y) = \beta\alpha(x,y) = (x,-y).
\end{equation} We readily check that 
$$\langle \alpha \rangle \cdot \langle\gamma\rangle = \langle\gamma\rangle \cdot \langle \alpha \rangle,$$
and
$$\langle \beta \rangle \cdot \langle\gamma\rangle = \langle\gamma\rangle \cdot \langle \beta \rangle,$$
so that with  Equation \eqref{commeq2} the first condition of  Theorem \ref{theorem:kanirosen} holds. Now by the Hurwitz genus formula, we find that $g_\alpha = g_\beta =  \frac{g-1}{2}$, and that $g_\gamma = 1$, so that the second condition holds. Finally the third condition holds as $\alpha\beta$ is the hyperelliptic map. We thus have the isogeny
\begin{equation}\label{splitg5}
\Jac(C)\sim \Jac(C/\langle \alpha \rangle)\times \Jac(C/\langle \beta \rangle) \times \Jac(C/\langle \gamma \rangle) \sim \Jac(C/\langle \alpha \rangle)^2 \times E_1,
\end{equation}
where $E_1$ is the elliptic curve defined by $E_1\colon  y^2 = x^4 + c$. Now let $E_2\colon  y^2=x^3+c$ be an elliptic curve. Note that there exist two maps, $\phi_1\colon  C \to E_1$ and $\phi_2\colon C\to E_2$, where the maps are given by $\phi_1(x,y)= (x^3,y)$ and $\phi_2(x,y)= (x^4,y)$.

Let $\zeta_{12}$ be a primitive $12^{th}$ root of unity, and let $a=\zeta_{12}\sqrt[12]{c}$. The change of variables $x \mapsto ax$ and $y\mapsto a^6y$ transforms $C$ to the model $C'\colon    y^2=x^{12}+1$. Computing with Magma \cite{Magma}, we find $C'/\langle\alpha\rangle$ to be the genus 2 curve given by
$$C'/\langle\alpha\rangle \colon  y^2 = x^6 - 6x^4 + 9x^2 -2.$$
We have a map $\phi_3 \colon   C'/\langle\alpha\rangle \to E_3$, where $\phi(x,y)=(x^2,y)$ and  $E_3\colon  y^2 = x^3 - 6x^2 + 9x -2$, which is an elliptic curve that has CM by $\mathbb{Q}(i)$. Hence, via the maps $\phi_2$ and $\phi_3$, we have that 
$$\Jac(C'/\langle\alpha\rangle) \sim E_2 \times E_3 \sim E_2 \times E_1,$$
where the second isogeny holds since, up to $\overline{\mathbb Q}$-isogeny, there is only one elliptic curve with CM by orders in $\mathbb Q(i)$. Hence,
$$\Jac(C) \sim E_2^2 \times E_1^3.$$
We thus conclude that
$$\ST^0(C) = \U(1)_2 \times \U(1)_3.$$

\end{proof}

\section{Splitting of the Jacobians }\label{sec:proofsplttheorem}

We will first prove two lemmas that give a partial splitting of the Jacobian of the curve $C\colon   y^2= x^{2g+2}+c$ in the case that $g$ is even or odd. We will build from these two cases to give a proof of Theorem \ref{splittheorem}.

\begin{lemma}\label{evencase}
Let $g=2k$ an even integer, and $C\colon y^2= x^{2g+2}+c$. Then
$$\Jac(C) \sim \Jac(\widetilde{C})^2,$$
where $\widetilde{C}\colon  y^2= x^{g+1}+c$.
\end{lemma}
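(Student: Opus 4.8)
The plan is to mimic the strategy used in the proof of Theorem~\ref{genus4b}, but to stop at the level of generality that applies to all even $g=2k$ rather than specializing to $g=4$. The key observation is that the isogeny
$$
\Jac(C) \sim \Jac(C/\langle\alpha\rangle)^2
$$
established in Equation~\eqref{splitg4} already holds for \emph{every} even $g$, since the verification of the three hypotheses of Theorem~\ref{theorem:kanirosen} for the automorphisms $\alpha$ and $\beta$ (via the Hurwitz genus formula, giving $g_\alpha=g_\beta=g/2$ and $g_{\alpha,\beta}=0$) was carried out for general even $g$ and did not use $g=4$. Thus the square structure is free; what remains is to identify the quotient curve $C/\langle\alpha\rangle$ explicitly.

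First I would identify $\Jac(C/\langle\alpha\rangle)$ with $\Jac(\tilde C)$, where $\tilde C: y^2 = x^{g+1}+c$. The natural candidate for the map is the degree-$2$ quotient $\phi:(x,y)\mapsto(x^2,y)$ from $C$ to $\tilde C$, exactly as in the $g=4$ case where $\phi$ sent $C$ to $C':y^2=x^5+c$. The main work is to check that the involution whose quotient realizes $\phi$ is compatible with $\langle\alpha\rangle$ (or $\langle\beta\rangle$) so that $\tilde C$ is genuinely the quotient $C/\langle\alpha\rangle$ up to isomorphism over $\overline{\mathbb Q}$. Concretely, I would verify that $C/\langle\alpha\rangle$ has genus $g/2=k$, which matches the genus of $\tilde C:y^2=x^{g+1}+c=x^{2k+1}+c$ (a hyperelliptic curve of odd degree $2k+1$, hence of genus $k$), and then produce an explicit isomorphism or use the matching of function fields to conclude $C/\langle\alpha\rangle \cong \tilde C$.

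I expect the main obstacle to be the bookkeeping in the change of variables that exhibits $C/\langle\alpha\rangle \cong \tilde C$ over $\overline{\mathbb Q}$: the automorphism $\alpha$ involves the constants $c^{1/(g+1)}$ and $c^{1/2}$, so diagonalizing the action and reading off invariant functions requires a careful choice of coordinates (as in the $g=5$ proof, where a $12$th root of unity and a Magma computation were needed to reach the clean model). Rather than computing invariants by hand in full generality, I would argue structurally: the two curves have equal genus, and the factor $\Jac(\tilde C)$ must appear in $\Jac(C)$ because $\phi$ induces a surjection $\Jac(C)\to\Jac(\tilde C)$; combined with the fact that $\Jac(C)$ is isogenous to a \emph{square} by Equation~\eqref{splitg4}, the dimension count forces $\Jac(C)\sim\Jac(\tilde C)^2$, giving the result without an explicit coordinate identification.
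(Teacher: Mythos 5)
There is a genuine gap, and it sits exactly where you chose to ``argue structurally'' instead of computing. From Equation~\eqref{splitg4} you know $\Jac(C)\sim A^2$ with $A=\Jac(C/\langle\alpha\rangle)$ of dimension $k$, and from $\phi$ you know $\Jac(\tilde C)$ (also of dimension $k$) is an isogeny factor of $\Jac(C)$. But ``factor of a square plus dimension count'' does not force $\Jac(\tilde C)\sim A$: in the semisimple isogeny category, if $A\sim B_1\times B_2$ with $B_1,B_2$ simple, non-isogenous, and of equal dimension, then $A^2\sim B_1^2\times B_2^2$ admits $B_1^2$ as a factor of dimension $\dim A$ that is \emph{not} isogenous to $A$. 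The paper's genus-$4$ argument that you are imitating closes this loophole only because $\Jac(C')$ is \emph{simple} there ($\End(\Jac(C'_{\overline{\mathbb Q}}))_{\mathbb Q}\simeq\mathbb Q(\zeta_5)$); for general even $g$ no such simplicity is available, and it genuinely fails: for $g=8$ one has $\tilde C\colon y^2=x^9+c$, whose Jacobian splits as an elliptic curve times a threefold (see the remark following Theorem~\ref{thm:x9} and Shioda's result cited there). Your fallback identification $C/\langle\alpha\rangle\cong\tilde C$ is also unsubstantiated --- the paper never claims it, and in the genus-$5$ proof the quotient $C'/\langle\alpha\rangle$ came out as a genus-$2$ curve not of trinomial form --- so neither branch of your argument closes.

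The paper's actual proof of Lemma~\ref{evencase} takes a different and more robust route that needs neither Kani--Rosen nor any simplicity: it uses \emph{two} maps to the same curve $\tilde C$, namely $\phi(x,y)=(x^2,y)$ and $\tilde\phi=\phi\circ\alpha$, and invokes the criterion of \cite[Section 5.2]{Fite2016}: it suffices that the pullbacks under $\phi$ and $\tilde\phi$ of a basis of regular $1$-forms on $\tilde C$ span $\Omega_C$. A short computation gives $\phi^*(x^i\,dx/y)=2x^{2i+1}\,dx/y$ and $\tilde\phi^*(x^i\,dx/y)$ equal to a nonzero constant times $x^{g-2-2i}\,dx/y$, and as $i$ runs over $0,\dots,\tfrac{g}{2}-1$ the exponents $2i+1$ and $g-2-2i$ exhaust $\{0,\dots,g-1\}$, yielding $\Jac(C)\sim\Jac(\tilde C)^2$ directly. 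If you want to salvage your outline, you must either carry out this pullback computation (or an equivalent explicit identification of the two isogeny factors with $\Jac(\tilde C)$), or supply a hypothesis controlling the isogeny decomposition of $\Jac(\tilde C)$; the dimension count alone cannot do the work.
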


\begin{proof}
We have a map, $\phi\colon  C \rightarrow \widetilde{C}$, given by $\phi(x,y)= (x^2,y)$. Moreover, we have an automorphism, $\alpha$, of $C$ given by $\alpha(x,y)= (c^{\frac{1}{g+1}}x^{-1}, c^{1/2}yx^{-(g+1)})$. This in turn induces a second map $\widetilde{\phi}\colon  C \rightarrow \widetilde{C}$ via 
$$\widetilde{\phi}(x,y)= \phi(\alpha(x,y))= \phi(c^{\frac{1}{g+1}}x^{-1}, c^{1/2}yx^{-(g+1)})= (c^{\frac{2}{g+1}}x^{-2}, c^{1/2}yx^{-(g+1)}).$$ 
As noted in \cite[Section 5.2]{Fite2016}, in order to prove the Lemma it is sufficient to check that the pullbacks of a basis of differential forms for $\Jac(\widetilde{C}$) via $\phi$ and $\widetilde{\phi}$ give a basis for the space of differential forms for $\Jac(C$). A basis for the space of regular $1$-forms of the Jacobian of a hyperelliptic curve of genus $g$ is given by forms $x^idx/y$ for $i=0,\cdots, g-1$ (see, for example, \cite[Section 3]{VanWamelen1998}). We thus compute
$$\phi^*\left(x^i\frac{dx}{y}\right) = \frac{x^{2i}d(x^2)}{y} = 2 \frac{x^{2i+1} dx}{y},$$
and 
$$\widetilde{\phi}^*\left( x^i\frac{dx}{y}\right) = \frac{c^{\frac{2i}{g+1}-\frac{1}{2}}x^{-2i}d\left(\frac{c^{\frac{2}{g+1}}}{x^2}\right)x^{g+1}}{y}= -2c^{\frac{2(i+1)}{g+1} - \frac{1}{2}} \frac{x^{g-2-2i}dx}{y}.$$ 
The only thing that remains to be checked is that 
$$\{x^{2j+1}, x^{g-2-2j}\; |\; j=0, \cdots, \frac{g}{2}-1\} = \{x^i\; |\; i=0,\cdots, g-1\}.$$
However, to obtain even exponents, say $x^{2m}$, in the set of the left hand side of the equation,  we may take $j= g/2 - (m+1)$ with $x^{g-2-2(g/2-m -1)}= x^{2m}$. For all of the odd exponents, say $x^{2m+1}$, we may take $j= m$ with $x^{2j+1}$.  
\end{proof}

\begin{lemma}\label{oddcase}
Let $g=2k+1$ be an odd integer, and $C\colon  y^2=x^{2g+2}+c.$ Then 
$$\Jac(C) \sim \Jac(\widetilde{C}) \times \Jac(C'),$$
where $\widetilde{C} \colon  y^2 = x^{g+1} + c$ and $C'\colon  y^2 = x^{g+2} + cx$ are curves of genus $k$ and $k+1$ respectively.
\end{lemma}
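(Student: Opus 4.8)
The plan is to mirror the strategy of Lemma \ref{evencase}: exhibit explicit maps from $C$ to each of the two target curves $\tilde C$ and $C'$, pull back a basis of holomorphic $1$-forms, and check that the resulting forms together span the full space of $1$-forms on $C$. Since $\deg C = 2g+2 = 4k+4$, the curve $C$ has genus $g = 2k+1$, while $\tilde C: y^2 = x^{g+1}+c = x^{2k+2}+c$ has genus $k$ and $C': y^2 = x^{g+2}+cx = x^{2k+3}+cx$ has genus $k+1$; note $k + (k+1) = 2k+1 = g$, so the dimensions match, which is the necessary bookkeeping for an isogeny.

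First I would write down the two maps. For $\tilde C$ the natural candidate is $\phi(x,y) = (x^2, y)$, exactly as in the even case, since squaring $x$ sends $x^{2g+2}+c$ to $x^{g+1}+c$. For $C'$, because the target has the odd-degree ``$+cx$'' shape, I expect a map of the form $\psi(x,y) = (x^2, y/x^{a})$ for a suitable exponent $a$ forcing $y^2 = x^{2g+2}+c$ to become $(y/x^a)^2 = x^{g+2}+cx$; dividing the defining equation of $C$ by $x^{2a}$ and matching $2g+2-2a = g+2$ gives $a = k+1$, and one checks the constant term $c$ lands on $cx$ after the substitution $x \mapsto x^2$. (An automorphism-composition trick as in Lemma \ref{evencase}, using $\alpha$, may be needed to produce the second independent map if a single $\psi$ does not suffice, but the morphism count $g = k + (k+1)$ suggests one map to each target is enough.)

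Next I would compute the pullbacks on the standard bases. Using $\{x^i\,dx/y : i = 0,\dots,k-1\}$ for $\tilde C$ and $\{x^j\,dx/y : j = 0,\dots,k\}$ for $C'$, the pullback under $\phi$ yields forms proportional to $x^{2i+1}\,dx/y$, and the pullback under $\psi$ yields forms proportional to $x^{2j+\,c_0}\,dx/y$ for some fixed shift $c_0$ determined by $a = k+1$ and the Jacobian of the substitution. I would then verify the combinatorial identity that the exponents $\{2i+1 : 0 \le i \le k-1\}$ and $\{2j + c_0 : 0 \le j \le k\}$ together give exactly $\{0,1,\dots,g-1\} = \{0,\dots,2k\}$, precisely the basis of $1$-forms on $C$; this is the analogue of the final set-equality check in Lemma \ref{evencase}, with the first map supplying the odd exponents and the second supplying the even ones (or vice versa).

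The main obstacle I anticipate is pinning down the exponent $a$ (hence the shift $c_0$) for the map to $C'$ and confirming that its pullbacks are genuinely holomorphic and linearly independent from the $\phi$-pullbacks — in particular that the two exponent sets are disjoint and exhaust $\{0,\dots,2k\}$ with no collision at the boundary indices. If the naive $\psi$ produces a form with a negative or out-of-range exponent, I would correct it by precomposing with the automorphism $\alpha$ (as was done to manufacture $\tilde\phi$ in Lemma \ref{evencase}), which replaces $x^j$-type exponents by their reflected counterparts $x^{g-\text{something}-j}$ and thereby shifts the even exponents into the correct window. Once the two exponent families are shown to partition $\{0,1,\dots,2k\}$, the pullbacks span $H^0(C,\Omega^1)$, and by the criterion cited from \cite[Section 5.2]{Fite2016} the induced map $\Jac(\tilde C)\times\Jac(C')\to\Jac(C)$ is an isogeny, completing the proof.
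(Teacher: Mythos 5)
Your overall strategy is exactly the paper's: the map $\phi(x,y)=(x^2,y)$ to $\tilde C$, a second map to $C'$, and the check that the pulled-back $1$-forms supply the odd and even exponents respectively, partitioning $\{0,\dots,g-1\}$. But your candidate second map is wrong, and the error is in the degree matching. You set $\psi(x,y)=(x^2,y/x^a)$ and match $2g+2-2a=g+2$; this forgets that the target's $x$-coordinate is $x^2$, so the leading term $X^{g+2}$ of $C'$ has degree $2(g+2)=2g+4$ in the source variable, not $g+2$. The correct matching is $2g+2-2a=2g+4$, giving $a=-1$: one must \emph{multiply} $y$ by $x$, not divide. (Your own equation $2g+2-2a=g+2$ in fact gives $a=g/2=k+\tfrac12$, not an integer, and in particular not the $a=k+1$ you wrote down.) Concretely, with $a=k+1$ one gets
\begin{equation*}
\left(\frac{y}{x^{k+1}}\right)^2=\frac{x^{4k+4}+c}{x^{2k+2}}=X^{k+1}+cX^{-(k+1)},\qquad X=x^2,
\end{equation*}
which is not $C'$ (after clearing denominators it is a different curve), so $\psi$ is not a morphism to $C'$ at all, and precomposing with the automorphism $\alpha$ cannot repair this: that trick reflects exponents of already-valid pullbacks, it does not turn a non-morphism into a morphism.

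The fix is small and is what the paper does: take $\tilde{\phi}(x,y)=(x^2,xy)$, since $(xy)^2=x^2(x^{2g+2}+c)=X^{g+2}+cX$ with $X=x^2$. Then
\begin{equation*}
\tilde{\phi}^*\left(x^i\,\frac{dx}{y}\right)=\frac{x^{2i}\,d(x^2)}{xy}=2\,\frac{x^{2i}\,dx}{y},
\end{equation*}
so for $i=0,\dots,k$ you get exactly the even exponents $\{0,2,\dots,2k\}$, complementing the odd exponents $\{1,3,\dots,2k-1\}$ from $\phi^*$, and the two families partition $\{0,\dots,g-1\}$ with no collisions — no reflection step is needed, unlike in Lemma \ref{evencase}. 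With this replacement the rest of your argument (dimension count $k+(k+1)=g$, spanning of $\Omega_C$, and the isogeny criterion from \cite[Section 5.2]{Fite2016}) goes through verbatim and coincides with the paper's proof.
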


\begin{proof}
We have a map $\phi\colon  C \rightarrow \widetilde{C}$, given by $\phi(x,y)=(x^2,y)$, and a map $\widetilde{\phi}\colon  C \rightarrow C'$, given by $\widetilde{\phi}(x,y) = (x^2,xy)$. We now only need to check that the pullbacks of the basis elements for the space of regular $1$-forms of the Jacobians of $\widetilde{C}$ and $C'$ give a basis for the space of regular $1$-forms of $\Jac(C)$. We therefore compute
$$ \phi^*\left(x^i \frac{dx}{y}\right) = 2\frac{x^{2i+1}dx}{y},$$
while 
$$\widetilde{\phi}^*\left(x^i \frac{dx}{y}\right) = \frac{x^{2i}d(x^2)}{xy} = 2 \frac{x^{2i} dx}{y}.$$
Now, in the first case, as $i$ runs through $0, \cdots, k-1,$ we get all the odd forms corresponding to $x, \cdots , x^{2k-1}.$ In the second case we get all of the even ones, and this concludes the proof.
\end{proof}

We are now in a position to prove the following theorem.
\begin{theorem}\label{splitting}
Let $v_2\colon  \mathbb Q^* \rightarrow \mathbb Z$ denote the $2$-adic valuation map, i.e. $v_2(a/b) = \alpha$, where $\frac{a}{b}=2^\alpha\frac{e}{d}$ and $p$ does not divide $e$ or $d$. Let $C_1\colon y^2= x^{2g+2}+c$ be a hyperelliptic curve of genus $g$ and write $k:= v_2(g+1)$. Then we have the following isogeny over $\overline{\mathbb Q}$
$$\Jac(C_1) \sim \Jac(y^2= x^{{(g+1)}/{2^k}}+c)^2 \times \prod_{i=0}^{k-1} \Jac(y^2=x^{{(g+1)}/{2^{i}}+1}+cx).$$
\end{theorem}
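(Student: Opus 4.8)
The plan is to prove the splitting by induction on $k = v_2(g+1)$, using Lemma \ref{evencase} and Lemma \ref{oddcase} as the engine and tracking how the $2$-adic valuation and the degree of the defining polynomial change at each step. The idea is that the two lemmas are exactly the two cases ($g$ even versus $g$ odd), and that each application of the odd-case lemma strips off one $C_2$-type factor while producing a residual $C_1$-type curve whose invariant $v_2(\,\cdot\,)$ has dropped by one, so that after $k$ applications one lands in the even case and the even-case lemma supplies the final squared factor.

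First I would dispose of the base case $k=0$. Here $g+1$ is odd, so $g$ is even, and Lemma \ref{evencase} applies directly to give $\Jac(C) \sim \Jac(y^2 = x^{g+1}+c)^2$. Since $\frac{g-(2^0-1)}{2^0}+1 = g+1$ and the product $\prod_{i=1}^{0}$ is empty, this is precisely the claimed isogeny when $k=0$.

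For the inductive step, suppose $k \geq 1$, so that $g$ is odd. I would apply Lemma \ref{oddcase} to peel off one $C_2$-type factor: $\Jac(C) \sim \Jac(\tilde C) \times \Jac(C')$, where $\tilde C : y^2 = x^{g+1}+c$ and $C' : y^2 = x^{g+2}+cx$. The key observation is that $\tilde C$ is again a curve of the form treated by the theorem: writing $g_1 := (g-1)/2$ for its genus, one has $g+1 = 2g_1+2$, so $\tilde C : y^2 = x^{2g_1+2}+c$, and moreover $v_2(g_1+1) = v_2((g+1)/2) = k-1$. Thus the induction hypothesis applies to $\tilde C$ with invariant $k-1$, and substituting its splitting into the isogeny above produces a squared factor $\Jac(y^2 = x^{(g+1)/2^{k}}+c)^2$ inherited from $\tilde C$, together with the $C_2$-type factors coming from $\tilde C$ and the additional factor $\Jac(C') = \Jac(y^2 = x^{g+2}+cx)$.

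The remaining work — and the place where I expect the real bookkeeping to live — is to check that the exponents assemble correctly. I would verify the identity $\frac{g-(2^k-1)}{2^k}+1 = \frac{g+1}{2^k}$, and the analogous $\frac{g-(2^{k-1}-1)}{2^{k-1}}+2 = \frac{g+1}{2^{k-1}}+1$ for the $C_2$-type exponents; confirm that $(g+1)/2^k$ is an \emph{odd} integer precisely because $k = v_2(g+1)$, so that the squared factor is genuinely of $C_3$-type; and track that the $k-1$ factors produced by the induction hypothesis, reindexed through the substitution $g_1 = (g-1)/2$, combine with the single new factor $\Jac(C')$ to give the full list of $k$ factors of $C_2$-type appearing in the statement. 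The main obstacle is therefore purely combinatorial: keeping the valuation drop, the genus of the residual curve, and the reindexing of the $C_2$-type factors mutually consistent across the induction, rather than any new geometric input beyond the two lemmas already established.
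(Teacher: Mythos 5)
Your proposal is essentially the paper's own proof: the paper also inducts on $k=v_2(g+1)$, settling $k=0$ by Lemma \ref{evencase}, and handling the step by applying Lemma \ref{oddcase} to split off $\Jac(y^2=x^{g+2}+cx)$ and then invoking the hypothesis for $y^2=x^{g+1}+c=x^{2g'+2}+c$ with $g'=(g-1)/2$ and $v_2(g'+1)=k-1$ (the paper misprints this as $v_2(g'-1)$); your exponent identities and the observation that $(g+1)/2^k$ is odd are exactly its bookkeeping.

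One caution about the final consistency check you defer to the end: matching your factors against ``the full list of $k$ factors of $C_2$-type appearing in the statement'' will fail \emph{as literally printed}, because the displayed product in Theorem \ref{splitting} carries an index typo --- the exponent $\frac{g-(2^{k-1}-1)}{2^{k-1}}+2$ does not depend on $i$, so the printed formula asserts $k$ identical factors. The factors your induction actually produces are $\Jac\bigl(y^2=x^{\frac{g-(2^{i-1}-1)}{2^{i-1}}+2}+cx\bigr)=\Jac\bigl(y^2=x^{\frac{g+1}{2^{i-1}}+1}+cx\bigr)$ for $i=1,\dots,k$: the new factor $\Jac(C')$ contributes $i=1$ (exponent $g+2$), and the $k-1$ factors from the hypothesis for $g_1=(g-1)/2$ reindex to $i=2,\dots,k$. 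A dimension count confirms the correction: for $g=11$, $k=2$, the printed formula yields total dimension $2\cdot 1+2\cdot 3=8\neq 11$, whereas the $i$-dependent product $\Jac(y^2=x^3+c)^2\times\Jac(y^2=x^{13}+cx)\times\Jac(y^2=x^{7}+cx)$ has dimension $2+6+3=11$. The paper's own inductive step propagates the same typo (its final display likewise has the exponent independent of $i$), so this is a defect of the statement rather than of your method; carrying out your reindexing honestly produces the corrected product, and with that emendation your plan goes through exactly as in the paper.
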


\begin{proof}
Let $k:= v_2(g+1)$. We will prove the result by induction on $k$. If $k=0$, then $g$ is even and we have already shown that
$$\hbox{Jac}(C_1) \sim \hbox{Jac}(y^2= x^{g+1}+c)^2.$$
If $k=1$, then $g= 2a-1$ (with $(2,a)=1$), and by our result for odd genus, we have
$$\hbox{Jac}(C_1) \sim \hbox{Jac}(y^2= x^{g+1}+c) \times \hbox{Jac}(y^2 = x^{g+2}+cx).$$
Now, $g+1= 2a = 2(2b+1)= 2 (2b) +2$, for some integer $b$, and our result for even genus case implies that
$$\hbox{Jac}(C_1) \sim \hbox{Jac}(y^2= x^{{(g+1)}/{2}}+c)^2 \times \hbox{Jac}(y^2=x^{g+2}+cx).$$
By induction, we suppose that our result holds for $l$ and suppose $v_2(g+1)=l+1$. Then by our result for odd genus, we have
$$\hbox{Jac}(C_1) \sim \hbox{Jac}(y^2=x^{g+1}+c)\times \hbox{Jac}(y^2=x^{g+2}+cx).$$
By assumption, $g+1= 2^{l+1}d$ (with $d=2e +1$, for some integer $e$), so that $g+1= 2^{l+1}(2e+1) = 2g' +2$, where $g'=2^{l+1}e+2^l-1$. Thus $v_2(g'+1)= l$, and we may therefore use our induction hypothesis to conclude that
\begin{align*}
\hbox{Jac}(C_1)& \sim \left(\hbox{Jac}(y^2= x^{{(g'+1)}/{2^l}}+c)^2 \times \prod_{i=0}^{l-1} \hbox{Jac}(y^2=x^{{(g'+1)}/{2^{i}}+1}+cx)\right)\\&\hspace{1in}\times \hbox{Jac}(y^2=x^{g+2}+cx)\\
& \sim \hbox{Jac}(y^2= x^{{(g+1)}/{2^{l+1}}} +c)^2 \times \prod_{i=0}^{l} \hbox{Jac}(y^2=x^{{(g+1)}/{2^{i}}+1}+cx),
\end{align*}
since $g'=(g-1)/2$. 
\end{proof}

\section{A Further Splitting of The Jacobians of Theorem \ref{splittheorem}}\label{sec:breakdownjacobian}

Note that the curve $y^2=x^{{(g+1)}/{2^{k-1}}+1}+cx$ that appears in Theorem \ref{splittheorem} has odd genus since 
$$\frac{g+1}{2^{k-1}}+1 =2\left(\frac{g+1}{2^{k}}\right)+1 , $$
and $v_2(g+1)=k$ implies that $\frac{g+1}{2^{k}}$ is odd. In this section we show how to further split curves of this form.

Let $g$ be an odd integer and $C\colon y^2= x^{2g+1}+cx$ be a genus $g$ curve. Let $E\colon y^2=x^3+cx$ be an elliptic curve. Throughout this section, we work over the field $\mathbb F=\mathbb Q(\zeta, c^{1/g})$, where $\zeta=\zeta_g$ is a primitive $g$-th root of unity. The morphism $\phi\colon  C \rightarrow E$ defined by
$$\phi(x,y) = \left(x^{g}, yx^{(g-1)/2}\right)$$
is a non constant morphism from $C$ to the elliptic curve $E$. We would like to find  more morphisms from $C$ to families of lower genus curves.

Our ultimate goal is to be able to further break down our result from Theorem  \ref{splittheorem}  so that we may write the Jacobians of curves as a product of Jacobians of lower genus curves. Ideally, we would like to be able to write the Jacobian as a product of Jacobians of elliptic curves (genus 1) or genus 2 curves since the Sato-Tate groups of these lower dimension Jacobians are completely classified (see \cite{Barnet2011,Fite2012, Harris2010}).

\subsection{Morphisms to Lower Genus Curves}
For $i=0,1$ we define the curve $C_i$ to be 
$$C_i\colon y^2=\sum^{(g-1)/2}_{k=0} (-1)^{k}\left[\binom{g-k}{k} + \binom{g-k-1}{k-1}\right]\zeta^{ik}c^{k/g} x^{g-2k}.$$
Note that this is a curve of genus $g'=(g-1)/2$ and it is defined over $\mathbb F$. The following table gives $C_i$ for small values of $g$ and for $c=1$.  \\

\begin{tabular}{c|l}
    genus  of $C$& curve $C_i$\\
    \hline 
    3 & $y^2=x^3-3\zeta^{i}x$\\
    5 & $y^2=x^5-5\zeta^{i}x^3+5\zeta^{2i}x$\\
    7 & $y^2 = x^7-7\zeta^{i} x^5+14\zeta^{2i}x^3-7\zeta^{3i}x$\\
    9 & $y^2 = x^9 -9\zeta^{i} x^7+27\zeta^{2i}x^5 - 30\zeta^{3i}x^3 +9 \zeta^{5i} x$\\
    11 & $y^2=x^{11}-11\zeta^{i}x^9+44\zeta^{2i}x^7-77\zeta^{3i}x^5+55\zeta^{4i}x^3-11\zeta^{5i}x$
\end{tabular}

\begin{lemma}
The map 
$$\phi_i(x,y) = \left(\dfrac{x^2+\zeta^{i}c^{1/g}}{x}, \dfrac{y}{x^{a}} \right),$$
where $a=\frac{g+1}{2}$, is a nonconstanst morphism from $C$ to $C_i$.
\end{lemma}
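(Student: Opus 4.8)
The plan is to verify directly that $\phi_i$ lands on $C_i$; that is, to check that whenever $(x,y)$ satisfies $y^2 = x^{2g+1}+cx$, the image $(X,Y):=\phi_i(x,y)$ satisfies the defining equation of $C_i$. The computation that makes everything transparent is the one for the second coordinate: since $a=\frac{g+1}{2}$,
$$Y^2 = \frac{y^2}{x^{2a}} = \frac{x^{2g+1}+cx}{x^{g+1}} = x^g + c\,x^{-g}.$$
So the whole lemma reduces to showing that $x^g + cx^{-g}$ equals the right-hand side of the equation of $C_i$ after the substitution $X = (x^2+\zeta^i c^{1/g})/x$.

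To expose the structure I would set $u := \zeta^i c^{1/g}$. Because $\zeta=\zeta_g$ is a primitive $g$-th root of unity we have $\zeta^{ig}=1$, hence $u^g = \zeta^{ig}c = c$; moreover $\zeta^{ik}c^{k/g}=u^k$, so the defining polynomial of $C_i$ is exactly $\sum_{k=0}^{(g-1)/2}(-1)^k\bigl[\binom{g-k}{k}+\binom{g-k-1}{k-1}\bigr]u^k X^{g-2k}$. Writing $z=x$ and $\bar z = u/x$, one has $z+\bar z = X$, $z\bar z = u$, and
$$Y^2 = x^g + u^g x^{-g} = z^g + \bar z^{\,g}.$$
Thus the claim becomes a purely algebraic power-sum identity: $z^g+\bar z^{\,g}$ must equal the stated polynomial in $X=z+\bar z$ and $u=z\bar z$.

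This identity is the classical Waring/Dickson formula. The Dickson polynomial of the first kind $D_g(s,p)$ satisfies $D_g(z+\bar z,\,z\bar z)=z^g+\bar z^{\,g}$ and has the explicit expansion $D_g(s,p)=\sum_{k=0}^{(g-1)/2}\frac{g}{g-k}\binom{g-k}{k}(-p)^k s^{g-2k}$. I would prove the needed instance by induction on $g$ from the recurrence
$$z^{g}+\bar z^{\,g} = (z+\bar z)\bigl(z^{g-1}+\bar z^{\,g-1}\bigr) - z\bar z\bigl(z^{g-2}+\bar z^{\,g-2}\bigr),$$
which translates into $D_g = s\,D_{g-1} - p\,D_{g-2}$, together with the base cases $D_1=s$ and $D_2=s^2-2p$. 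Specializing $s=X$, $p=u$ gives $Y^2 = \sum_{k}\frac{g}{g-k}\binom{g-k}{k}(-u)^k X^{g-2k}$.

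Finally I would reconcile the two coefficient shapes via the elementary identity
$$\frac{g}{g-k}\binom{g-k}{k} = \binom{g-k}{k} + \binom{g-k-1}{k-1},$$
which drops out of $\binom{g-k}{k}=\frac{(g-k)!}{k!\,(g-2k)!}$ after clearing factorials (using the convention that the $k=0$ term of $\binom{g-k-1}{k-1}$ vanishes). This identifies $Y^2$ with the defining polynomial of $C_i$, so $\phi_i$ carries $C$ into $C_i$. Since $C$ is a smooth projective curve, the rational map $\phi_i$ extends automatically to a morphism on all of $C$, and it is nonconstant because its first coordinate $X=x+u\,x^{-1}$ is already nonconstant on $C$. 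The only step requiring genuine care is the power-sum identity of the third paragraph; everything else is bookkeeping, the decisive simplification being the collapse of $Y^2$ to the two-term Laurent polynomial $x^g+cx^{-g}$.
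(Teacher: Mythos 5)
Your proof is correct and is essentially the paper's argument in different dress: both reduce the lemma to the identity expressing $A^g+B^g$ in terms of $AB$ and $A+B$ --- the paper quotes this as Lockwood's identity (with $A=x^2$, $B=\zeta^i c^{1/g}$, citing Koshy) and substitutes directly into the equation of $C_i$, while you first collapse $Y^2$ to $x^g+cx^{-g}=z^g+\bar z^{\,g}$ and then rederive the very same identity as the Dickson expansion via the recurrence $D_g=sD_{g-1}-pD_{g-2}$, reconciling the two coefficient shapes through $\frac{g}{g-k}\binom{g-k}{k}=\binom{g-k}{k}+\binom{g-k-1}{k-1}$, which checks out. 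The only substantive additions in your write-up are the self-contained induction proof of the key identity (where the paper cites the literature) and the explicit nonconstancy and smooth-projective extension remarks, both of which the paper leaves implicit.
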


\begin{proof}
The proof relies on the following identity attributed to E.H. Lockwood (see, for example, \cite[Section 9.8]{Koshy2014}).
$$A^n+B^n = \sum^{\lfloor n/2\rfloor}_{k=0} (-1)^{k}\left[\binom{n-k}{k} + \binom{n-k-1}{k-1}\right](AB)^k(A+B)^{n-2k},$$
where $n\geq1$ and $\binom{r}{-1}=0$. 
Letting $n=g$, $A=x^2$, and $B=\zeta^ic^{1/g}$ yields
$$x^{2g}+c = \sum^{\frac{g-1}{2}}_{k=0} (-1)^{k}\left[\binom{g-k}{k} + \binom{g-k-1}{k-1}\right]\zeta^{ik}c^{k/g}x^{2k}(x^2+\zeta^ic^{1/g})^{g-2k},$$
since $\zeta^{ig}=1$. We multiply both sides by $x$ to get
\begin{align}\label{eqn:lockwood}
    x^{2g+1}+cx = \sum^{\frac{g-1}{2}}_{k=0} (-1)^{k}\left[\binom{g-k}{k} + \binom{g-k-1}{k-1}\right] \zeta^{ik}c^{k/g}x^{2k+1}(x^2+\zeta^ic^{1/g})^{g-2k}.
\end{align}

We now demonstrate that $\phi_i$ is indeed a morphism between $C$ and $C_i$. We apply the transformation of variables to $C_i$ to get
\begin{align*}
    \left(\frac{y}{x^a}\right)^2 &=\sum^{(g-1)/2}_{k=0} (-1)^{k}\left[\binom{g-k}{k} + \binom{g-k-1}{k-1}\right]\zeta^{ik} c^{k/g}\left(\dfrac{x^{2}+\zeta^{i}c^{1/g}}{x}\right)^{g-2k}\\
    y^2&=\sum^{(g-1)/2}_{k=0} (-1)^{k}\left[\binom{g-k}{k} + \binom{g-k-1}{k-1}\right]\zeta^{ik}c^{k/g}x^{2k+1} \left(x^{2}+\zeta^{i}c^{1/g}\right)^{g-2k}\\
    &=x^{2g+1}+cx,
\end{align*}
where the last equality holds by Equation \eqref{eqn:lockwood}. Hence, we have shown that $\phi_i$ is a morphism from $C$ to $C_i$.

\end{proof}

\subsection{Pullback of Differentials}
We claim that 
$$\text{Jac}(C) \sim E \times A,$$
where $\sim$ denotes isogeny over $\overline{\mathbb Q}$ and  $A$ is an abelian variety defined over $\mathbb Q$ for which $A\sim \text{Jac}(C_0) \times \text{Jac}(C_1)$.
As noted in \cite[Section 5.2]{Fite2016}, in order to prove this claim it is sufficient to check that there is an isomorphism of $\mathbb F$-vector spaces of regular differential forms  
$$\Omega_{C} =\phi^*(\Omega_{E}) \oplus \phi^*_0(\Omega_{C_0})\oplus  \phi^*_{1}(\Omega_{C_1}).$$

As noted in Section \ref{sec:proofsplttheorem}, a basis for the space of regular
$1$-forms of the hyperelliptic curve $C$ of genus $g$ is given by the forms $\omega_j=x^jdx/y$ for $j=0,\ldots, g-1$ . Similarly, for  both of the curves $C_i$, we have the following basis
$$\left\{\frac{dx}{y}, \frac{xdx}{y}, \ldots, \frac{x^{\frac{g-1}{2}-1}dx}{y} \right\}$$
since they are both hyperelliptic curves of genus $\frac{g-1}{2}$.
For the elliptic curve, we will use the nowhere vanishing differential $dx/y$.\\

We let $a=(g+1)/2$ and first note that 
\begin{equation}\label{differentialE}
\phi^*\left(\frac{dx}{y}\right) = \frac{d(x^{g})}{yx^{(g-1)/2}} = \frac{gx^{(g-1)/2}dx}{y} = gx^{a-1}\omega_0. 
\end{equation}
Furthermore, let $m$ be some integer between 0 and $\frac{g-1}{2}-1$. Then
\begin{align*}
    \phi^*_i\left(\frac{x^mdx}{y}\right) &= \frac{\left(\frac{x^2+\zeta^{i}c^{1/g}}{x}\right)^md\left(\frac{x^2+\zeta^{i}c^{1/g}}{x}\right)}{yx^{-a}}\\
    &=\frac{\left(\sum_{k=0}^{m}\binom{m}{k}x^{m-2k+a}\zeta^{ik}c^{k/g} \right)dx}{y}-\zeta^{i}c^{1/g}\frac{\left(\sum_{k=0}^{m}\binom{m}{k}x^{m-2k-2+a}\zeta^{ik}c^{k/g} \right)dx}{y}\\
    &= f_{i,m}(x) \frac{dx}{y},\numberthis \label{eqn:differential}
\end{align*}
where $f_{i,m}$ are polynomials given by
\begin{equation*}
f_{i,0}(x) = x^a - \zeta^i c^{1/g} x^{a-2},
\end{equation*}
and
\begin{equation}\label{polybasism}
f_{i,m}(x) = x^{m+a} + \sum_{k=1}^{m}\left(\binom{m}{k} - \binom{m}{k-1}\right) x^{m-2k+a}\zeta^{ik}c^{k/g} + \zeta^{i(m+1)}c^{\frac{m+1}{g}} x^{a-m-2}, 
\end{equation}
if $m>0$.
\begin{claim}
Given an integer $0 \leq n \leq \frac{g-1}{2} -1$, the set of polynomials $P_n := \{f_{i,m} | i=0,1; 0 \leq m \leq n\} \cup \{x^{a-1}\} $ forms a linearly independent set.
\end{claim}
\begin{proof}
We argue by induction on $n$. We note that $f_{0,0}$ is of degree $a$, while $x^{a-1}$ is of degree $a-1$ and
$$f_{0,0}(x) - f_{1,0}(x) = (1-\zeta c^{1/g}) x^{a-2},$$
a polynomial of degree $a-2$, so that the claim holds for $n=0$. For $n \geq 1$, we let $\{\lambda_{i,k}\}_{i=0, 1; 0\leq k \leq n}$ be scalars such that 
\begin{equation}\label{LI} 
\sum_{i=0, 1; 0\leq k \leq n} \lambda_{i,k} f_{i,k} + \lambda_{a-1} x^{a-1} = 0.
\end{equation}
We note that $f_{0,n}$ and $f_{1,n}$ are the only two polynomials in our family that are of degree $a + n$, so that (\ref{LI}) holds only if
\begin{equation}\label{topdegreeLI}
\lambda_{0,n} + \lambda_{1,n} = 0,
\end{equation}
by looking at the leading coefficients of $f_{i,n}$ in (\ref{polybasism}). Moreover, $f_{0,n}$ and $f_{1,n}$ are the only two polynomials in our family that contain a monomial of degree $a -n -2$. We, therefore, must have that 
\begin{equation}\label{bottomdegreeLI}
\lambda_{0,n} + \lambda_{1,n} \zeta^{n+1} = 0.
\end{equation}
Since $n < g$ and $\zeta$ is a primitive $g$-th root of unity, $\zeta^{n+1} \not= 1$, and together with equations (\ref{topdegreeLI}) and (\ref{bottomdegreeLI}) this implies that $\lambda_{0,n} = \lambda_{1,n} = 0$. The set of remaining polynomials in the family is now $P_{n-1}$ and, by induction, this implies that the remaining $\lambda_{i,k} = 0$ for all $i=0,1$ and $0\leq k \leq n - 1$, and $\lambda_{a-1} =0$, proving our claim.
\end{proof}
By the above claim for $n= \frac{g-1}{2} - 1$, the family $P_n$ exhibits $g$ linearly independent polynomials inside the $g$-dimensional vector space of polynomials of degree less than or equal to $g-1$. In particular $P_n$ is a basis for that space. We can thus write a basis for $\phi^*(\Omega_{E_F}) \oplus \phi^*_0(\Omega_{C_0})\oplus  \phi^*_{1}(\Omega_{C_1})$ that is also a basis for $\Omega_{C}$, via (\ref{differentialE}) and (\ref{eqn:differential}). Thus, we have proved the following. 
\begin{proposition}
$$\Jac(C) \sim E \times \Jac(C_0) \times \Jac(C_1),$$
where $\sim$ denotes isogeny over $\overline{\mathbb Q}$.
\end{proposition}

\section{A New Algorithm to Compute $\ST^{0}(C)$}\label{sec:altmethod}

In this section we describe an algorithm to compute the identity component of the Sato-Tate group of the Jacobian for curves of the form 
$$C_1\colon y^2=x^{2g+2} +c, \quad  C_2\colon y^2=x^{2g+1} +c, \quad C_3\colon y^2=x^{2g+1} +cx,$$ 
where $c\in\mathbb Q^*$ is a constant. Note that the Jacobians of the curves in all three families are CM abelian varieties (see, for example, \cite{Muller2017} or \cite{Wolfart2001}).  We show that the algorithm coincides with our result for the curve $y^2=x^{10}+c$. We then use this method to prove that the identity component of the Sato-Tate group of $y^2=x^9 +c$ is $\U(1) \times \U(1) \times \U(1)$, which confirms an unpublished result of Zywina \cite{Zywina}.  We then compute $\ST^{0}(C_1),\ST^0(C_2),$ and $\ST^0(C_3)$ for genus 2 through 10 which give evidence for several conjectures which we formulate.

\subsection{Preliminaries}\label{sec:altmethod1}
We begin by defining the Sato-Tate group $\ST(A)$ of an abelian variety $A/K$, where $K$ is a number field, of dimension $g$ as in \cite[Section 3.2]{SutherlandAWSNotes} and \cite[Chapter 15]{Mur93}.

For an odd prime $\ell$, the Tate module is defined as $T_{\ell}:=\varprojlim_{n} A[\ell^n]$ to be a free $\mathbb{Z}_{\ell}$-module of rank $2g$, and the rational Tate module is defined as $V_{\ell}:=T_{\ell}\otimes_{\mathbb{Z}} \mathbb{Q}$ to be a $\mathbb{Q}_{\ell}$-vector space of dimension $2g.$ The Galois action on the Tate module is given by an $\ell$-adic representation $$\rho_{A,\ell}:\Gal(\overline{K}/K) \rightarrow \Aut(V_{\ell}) \cong \GL_{2g}(\mathbb{Q}_{\ell}).$$ 
Let $G_{\ell}$ denote the image of this map. We let $G^{\Zar}_{\ell}$ denote the Zariski closure of $G_{\ell}$ in $\GL_{2g,\mathbb{Q}_{\ell}}$ (as an algebraic group), and we define $G^{1,\Zar}_{\ell}$ by adding the symplectic constraint  $M^t\Omega M=\Omega$, where
$$\Omega:=\begin{pmatrix}~&-I_g\\I_g&~ \end{pmatrix},$$
so that $G_{\ell}^{1,\Zar}$ is a subgroup of $\Sp_{2g,\mathbb{Q}_{\ell}}$.

Choose an embedding $\iota\colon \mathbb{Q}_{\ell} \rightarrow \mathbb{C}$ and use it to define $G_{\ell,\iota}^{1,\Zar}(\mathbb{C})$, which is unique up to conjugacy. We then define $\ST(A) \subseteq \USp(2g)$ as a maximal compact subgroup of $G_{\ell,\iota}^{1,\Zar} (\mathbb{C})$ (unique up to conjugacy).  

Over an appropriate cyclotomic field $k$, the Tate module of the Jacobian splits into a sum of one-dimensional Galois characters (see, for example, \cite[Example 1.2]{Serreladicrep}).   This allows us to apply some results from group theory.  The $\ell$-adic monodromy group $G_\ell^{\Zar}$ is equal to the  dual of the Tate module (see \cite[Section 0]{Pink}) and so $G_\ell^{\Zar}$ is dual to the the group generated by these characters. By work of Serre \cite[Section 8.3.2]{SerreNXP}, $G_\ell^{1,\Zar}$ is the dual of the group generated by these  characters modulo the cyclotomic character. By definition, the group $\ST(A)$ is a maximal compact subgroup of  $G_{\ell,\iota}^{1,\Zar}(\mathbb{C})$,  so $\ST^{0}(A)$ is dual to the maximal torsion-free quotient of the group generated by these characters.  If all of the one-dimensional characters come from Jacobi sums, as is the case in Sections \ref{sec:workedexamples} and \ref{sec:highergenusex}, then the $p$-adic valuation map is a map from this group to an explicit abelian group and the kernel of this map is the torsion subgroup.  

Before we define the map, we first recall Stickelberger's Congruence Theorem.

\subsection{Stickelberger's Congruence Theorem}\label{sec:stickelberger}  
The background information in this section can be found in \cite{Con} and \cite[Chapters 6 and 8]{IR90}.  Let  $p$ be a prime, $\mathbb{F}_q$ be a finite field with $q=p^f$ elements, and $\zeta_p , \zeta_{q-1} \in \mathbb{C}$ be fixed roots of unity with respective orders $p$ and $q-1$. We then have the following diagram of number fields and primes 
\begin{center}
\begin{tikzcd}
\mathbb{Q}(\zeta_{q-1},\zeta_p)  \arrow{d}& \mathfrak{B}_1^{p-1} \cdots \mathfrak{B}_g^{p-1} \arrow{d} \\ 
\mathbb{Q}(\zeta_{q-1}) \arrow{d} & \mathfrak{p}_1 \cdots \mathfrak{p}_g \arrow{d}\\
\mathbb{Q} & p
\end{tikzcd}
\end{center}
where $\mathfrak{B_i}$ lies over the prime $\mathfrak{p_i}$ and  $g=\phi(q-1)/f$ (and $\phi$ is Euler's totient function). Fix any prime $\mathfrak{p}$ in $\mathbb{Q}(\zeta_{q-1})$ lying over $p$ and let $\mathfrak{B}$ be the unique prime in $\mathbb{Q}(\zeta_{q-1},\zeta_p)$ lying over $\mathfrak{p}$.  Let $\omega_{\mathfrak{p}}$ be the Teichm{\"u}ller character on $\mathbb{F}_q$.

For $0 \leq b < q-1$, write the base $p$ expansion of $b$ as 
$$b=b_0 +b_1p +\cdots +b_{f-1}p^{f-1},$$
where $0 \leq b_i \leq p-1$ and not all $b_i=p-1.$
Recall from Equation \ref{eqn:gausssum} that the Gauss sum of a multiplicative character $\chi$ of $\mathbb{F}_q$ is 
$$g(\chi):= \displaystyle\sum_{ x \in \mathbb{F}_q} \chi(x)\zeta_p^{\Tr(x)}.$$
The normalized Jacobi sum of the multiplicative characters $\chi_1,\chi_2,\dots,\chi_r$ of  $\mathbb{F}_q$ is defined by 
$$J(\chi_1,\ldots,\chi_r):=(-1)^{r}\sum_{ x_1+x_2+ \cdots +x_r=1}\chi_1(x_1) \cdots \chi_r(x_r).$$

\begin{theorem}{(Stickelberger's Congruence Theorem \cite{St1890})}\label{thm:stickelberger}
$$g(\omega_\mathfrak{p}^{-b})\equiv \dfrac{(\zeta_p-1)^{b_0 + \cdots + b_{f-1}}}{b_0! \cdots b_{f-1}!}\mod \mathfrak{B}^{b_0 + \cdots +b_{f-1}+1}.$$
\end{theorem}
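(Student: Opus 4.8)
The plan is to work $\mathfrak{B}$-adically and to isolate the leading term of the Gauss sum as a power series in $\pi := \zeta_p - 1$. Recall that $\pi$ is a uniformizer of the totally ramified extension $\mathbb{Q}_p(\zeta_p)/\mathbb{Q}_p$, so that $v_{\mathfrak{B}}(\pi)=1$ while $v_{\mathfrak{B}}(p)=p-1$, and that the Teichmuller character satisfies $\omega_{\mathfrak{p}}(x)\equiv x \pmod{\mathfrak{B}}$. Writing $s(b):=b_0+\cdots+b_{f-1}$ for the digit sum, I would in fact aim to prove the sharper congruence
\[
G(\omega_{\mathfrak{p}}^{-b}) \equiv \frac{\pi^{s(b)}}{b_0!\cdots b_{f-1}!} \pmod{\mathfrak{B}^{s(b)+1}};
\]
since $b_0\ge 0$ gives $\mathfrak{B}^{s(b)+1}\subseteq \mathfrak{B}^{b_1+\cdots+b_{f-1}+1}$, this is stronger than the stated congruence and hence implies it.

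First I would substitute $\zeta_p = 1+\pi$ and expand, using the integer representative $\mathrm{Tr}(x)\in\{0,\dots,p-1\}$ of the trace,
\[
\zeta_p^{\mathrm{Tr}(x)} = (1+\pi)^{\mathrm{Tr}(x)} = \sum_{k\ge 0}\binom{\mathrm{Tr}(x)}{k}\pi^k,
\]
which turns the Gauss sum into $G(\omega_{\mathfrak{p}}^{-b}) = -\sum_{k\ge 0}\pi^k S_k$ with $S_k := \sum_{x\in\mathbb{F}_q^\times}\omega_{\mathfrak{p}}^{-b}(x)\binom{\mathrm{Tr}(x)}{k}$. The problem is thereby reduced to understanding the character sums $S_k$ modulo powers of $\mathfrak{p}$, equivalently modulo powers of $\mathfrak{B}^{p-1}$.

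The heart of the computation is the evaluation of $S_{s(b)}$ modulo $\mathfrak{p}$. Since the top-degree part of $k!\binom{\mathrm{Tr}(x)}{k}$ is $\mathrm{Tr}(x)^k$, and $\mathrm{Tr}(x)=x+x^p+\cdots+x^{p^{f-1}}\equiv \omega_{\mathfrak{p}}(x)+\omega_{\mathfrak{p}}(x)^p+\cdots \pmod{\mathfrak{B}}$, the multinomial expansion of $\mathrm{Tr}(x)^{s(b)}$ contains the monomial $\omega_{\mathfrak{p}}(x)^{b_0+b_1p+\cdots}=\omega_{\mathfrak{p}}(x)^b$ with coefficient $\binom{s(b)}{b_0,\dots,b_{f-1}}=s(b)!/(b_0!\cdots b_{f-1}!)$. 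Combining this diagonal term with the orthogonality relation $\sum_x \omega_{\mathfrak{p}}^{-b}(x)\omega_{\mathfrak{p}}(x)^b = q-1\equiv -1\pmod{\mathfrak{p}}$ (every monomial $\omega_{\mathfrak{p}}(x)^m$ with $m\not\equiv b \pmod{q-1}$ summing to $0$), the factors of $s(b)!$ cancel and one extracts $S_{s(b)}\equiv -1/(b_0!\cdots b_{f-1}!)\pmod{\mathfrak{p}}$, which is exactly the claimed leading coefficient. For $f=1$ this orthogonality argument is already complete: there $\mathrm{Tr}$ is the identity, there is a single digit $b=b_0<p$, and $1/b!$ is automatically a $p$-adic unit.

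The \emph{main obstacle} is controlling every other term for general $f$, i.e.\ showing $\sum_{k\ne s(b)}\pi^k S_k\equiv 0\pmod{\mathfrak{B}^{s(b)+1}}$. The terms with $k>s(b)$ are immediate, since $v_{\mathfrak{B}}(\pi^k S_k)\ge k>s(b)$, and $S_0=\sum_x\omega_{\mathfrak{p}}^{-b}(x)=0$ disposes of $k=0$. The genuine difficulty is the range $0<k<s(b)$: a crude bound $v_{\mathfrak{p}}(S_k)\ge 1$ gives only $v_{\mathfrak{B}}(\pi^k S_k)\ge k+(p-1)$, which need not exceed $s(b)$ once the digit sum is large; moreover the same largeness spoils the naive cancellation of $s(b)!$ above when $p\mid s(b)!$. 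Both issues are resolved by a carry analysis: one tracks, in the multinomial expansion of $\mathrm{Tr}(x)^k$ together with the reduction of exponents modulo $q-1$, how the $p$-adic valuations of the multinomial and factorial coefficients (via Lucas' congruences) force $S_k$ into sufficiently high powers of $\mathfrak{p}$, while confirming that only the digit-matched monomial survives in $S_{s(b)}$. This bookkeeping can be organized by reducing to the prime-field case of the previous paragraph and assembling the contributions of the individual digits, the equal-digit building blocks being governed by a Davenport--Hasse type relation. Once the three ranges $k<s(b)$, $k=s(b)$, and $k>s(b)$ are assembled, only the middle term survives modulo $\mathfrak{B}^{s(b)+1}$, and the sign in $G=-\sum_k \pi^k S_k$ converts $S_{s(b)}\equiv -1/\prod_i b_i!$ into the stated positive leading coefficient, completing the proof.
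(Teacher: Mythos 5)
The paper offers no proof of this statement --- it is quoted directly from Stickelberger \cite{St1890} --- so your sketch must be judged on its own merits, and in the range $s(b):=b_0+\cdots+b_{f-1}\leq p-1$ it is in fact correct and essentially complete: there the minimal-weight issue you worry about is trivial, since any exponent $m=\sum_i c_ip^i$ with $c_i\geq 0$ and $\sum_i c_i\leq p-1$ satisfies $m\leq (p-1)p^{f-1}<q-1+b$, so $m\equiv b\pmod{q-1}$ forces $m=b$ and $(c_i)=(b_i)$; your orthogonality computation of $S_{s(b)}$ and the bound $v_{\mathfrak{B}}(\pi^kS_k)\geq k+(p-1)\geq s(b)+1$ for $0<k<s(b)$ then go through exactly as you say (this covers $f=1$ entirely). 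You also correctly observe that the modulus $\mathfrak{B}^{s(b)+1}$ is at least as strong as the one displayed in the statement.

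The genuine gap is the case $s(b)\geq p$, which occurs for many $b$ as soon as $f\geq 2$, and there your architecture does not merely become delicate --- it fails outright. Since you use the integer representative $\mathrm{Tr}(x)\in\{0,\dots,p-1\}$, the binomial coefficient $\binom{\mathrm{Tr}(x)}{k}$ vanishes identically for $k\geq p$, so $S_k=0$ for all $k\geq p$; in particular your designated main term $S_{s(b)}$ is identically zero when $s(b)\geq p$, and the claimed congruence $S_{s(b)}\equiv -1/(b_0!\cdots b_{f-1}!)\pmod{\mathfrak{p}}$ is false, not just hard to prove. (Relatedly, substituting $\mathrm{Tr}(x)\equiv\sum_i\omega_{\mathfrak{p}}(x)^{p^i}$ into the polynomial $\binom{t}{k}$ coefficientwise is illegitimate for $k\geq p$, since coefficients such as the linear coefficient of $\binom{t}{p}$, namely $\pm 1/p$, are not $p$-integral.) In this range the congruence must instead emerge from terms $\pi^kS_k$ with $k\leq p-1$ in which $S_k$ is divisible by powers of $p$, each worth $p-1$ in $\mathfrak{B}$-valuation --- precisely the carry phenomenon --- and the patches you gesture at do not assemble: a Davenport--Hasse lifting relation governs only norm-lifted characters, i.e.\ equal-digit exponents $b=b_0\frac{q-1}{p-1}$, and there is no multiplicative mechanism to combine digit blocks, because $G(\omega_{\mathfrak{p}}^{-a})G(\omega_{\mathfrak{p}}^{-b})$ differs from $G(\omega_{\mathfrak{p}}^{-(a+b)})$ by a Jacobi sum. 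That Jacobi sum is exactly what the standard proof exploits: one argues by induction on $b$ using the Frobenius invariance $G(\omega_{\mathfrak{p}}^{-pb})=G(\omega_{\mathfrak{p}}^{-b})$ together with the relation $G(\omega_{\mathfrak{p}}^{-a})G(\omega_{\mathfrak{p}}^{-b})=J(\omega_{\mathfrak{p}}^{-a},\omega_{\mathfrak{p}}^{-b})\,G(\omega_{\mathfrak{p}}^{-(a+b)})$ and an explicit congruence for the Jacobi sum that records whether adding $a$ and $b$ produces a carry. Replacing your expansion bookkeeping for $s(b)\geq p$ with that induction (or, alternatively, invoking Gross--Koblitz) is what is needed to close the argument.
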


We will use Stickelberger's Congruence Theorem  with $q=p$ to compute the $\mathfrak{B}$-adic valuations of the Jacobi sums arising in Theorems \ref{theorem:pointcountgeneral} and \ref{theorem:pointcount2}. Given that $\mathfrak{B}$ will always divide the quantity $(\zeta_p-1)$ exactly once, we note the following immediate consequence of Stickelberger's Congruence Theorem.

 \begin{corollary}\label{cor:stickelberger}
 Let $\ord_\mathfrak{B} \colon  \mathbb{Q}(\zeta_{p-1}, \zeta_p) \rightarrow \mathbb{Z}$ denote the $\mathfrak{B}$-adic valuation map. Then, for $0\leq b\leq p-1$,
 $${\ord}_\mathfrak{B}(g(\omega_\mathfrak{p}^{-b})) = b.$$
 \end{corollary}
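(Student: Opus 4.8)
The plan is to specialize Stickelberger's Congruence Theorem (Theorem \ref{thm:stickelberger}) to the case $q=p$ and then simply read off the $\mathfrak{B}$-adic valuation of its explicit right-hand side. First I would observe that taking $q=p$ forces $f=1$, so that for $0\le b<p-1$ the base-$p$ expansion of $b$ collapses to the single digit $b=b_0$ (with $b_0\neq p-1$, as the hypothesis requires). With $f=1$ the theorem therefore reduces to
$$G(\omega_\mathfrak{p}^{-b})\equiv \frac{(\zeta_p-1)^{b}}{b!}\pmod{\mathfrak{B}^{b+1}},$$
so the whole computation comes down to evaluating the valuation of this one explicit leading term.

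Next I would compute $\hbox{ord}_\mathfrak{B}$ of that term factor by factor. For the numerator, the remark recorded just before the statement tells us that $\mathfrak{B}$ divides $\zeta_p-1$ exactly once, whence $\hbox{ord}_\mathfrak{B}\bigl((\zeta_p-1)^{b}\bigr)=b$. For the denominator, $b!$ is a product of positive integers each strictly less than $p$, hence coprime to $p$ and a unit at $\mathfrak{B}$, so that $\hbox{ord}_\mathfrak{B}(b!)=0$. Consequently the displayed leading term has $\mathfrak{B}$-adic valuation exactly $b$.

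The one point requiring care — and the step I expect to be the main (if minor) obstacle — is ensuring that the congruence genuinely \emph{pins down} the valuation rather than merely bounding it from below. This is exactly where the precision of Stickelberger's theorem enters: the leading term has valuation $b$ while the congruence holds modulo $\mathfrak{B}^{b+1}$, and since $b<b+1$ the quantity $G(\omega_\mathfrak{p}^{-b})$ cannot first disagree with it at valuation $b$; hence $\hbox{ord}_\mathfrak{B}(G(\omega_\mathfrak{p}^{-b}))=b$. (Had the modulus exponent been at most $b$, no conclusion about the valuation would follow, so verifying that the modulus exceeds the valuation of the leading term is the crux of the argument.) Finally, for $0\le b<p-1$ this is literally the asserted equality; and for an arbitrary integer $b$ one first replaces $\omega_\mathfrak{p}^{-b}$ by $\omega_\mathfrak{p}^{-(b \bmod (p-1))}$, using that $\omega_\mathfrak{p}$ has order $p-1$, which is precisely what forces the result to be read modulo $p-1$ in general.
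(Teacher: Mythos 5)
Your proof is correct and is exactly the argument the paper intends: the paper states the corollary as an ``immediate consequence'' after remarking that $\mathfrak{B}$ divides $\zeta_p-1$ exactly once, and your write-up simply makes explicit the specialization $q=p$, $f=1$, the unit-ness of $b!$ at $\mathfrak{B}$, and the reduction of general $b$ modulo $p-1$ via the order of $\omega_\mathfrak{p}$. One remark: your crucial step uses the congruence modulo $\mathfrak{B}^{b+1}$, which is the standard form of Stickelberger's congruence (modulus exponent $b_0+\cdots+b_{f-1}+1$), whereas the paper's typeset Theorem~\ref{thm:stickelberger} reads $\mathfrak{B}^{b_1+\cdots+b_{f-1}+1}$, omitting $b_0$; taken literally, with $f=1$ that gives only a congruence modulo $\mathfrak{B}$, from which (as you yourself observe) the valuation could not be pinned down, so you have in effect silently corrected a typo in the paper's statement, and your version is the right one.
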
 

In the case of  $y^2=x^9+c$ and  $p \equiv 1 \pmod 9$, Corollary \ref{coro:d=9} tells us that the Jacobi sums that arise in the point count formula are all of the form $J(\chi^m, \phi)$ for $1 \leq m \leq 8$  where $\chi = T^{(p-1)/9}, \phi= T^{(p-1)/2}$, and $T$ is any fixed generator of the character group $\widehat{\mathbb F_p^{\times}}$. In particular, given $\mathfrak{p}$ dividing $\mathfrak{B}$, we can choose $T = \omega_\mathfrak{p}^{-1}$.

\subsection{The Map}

Let $p$ be a split prime of the CM field $K$ and let $\iota_1,\dots,\iota_n$  be the embeddings  of the field of definition of the one-dimensional Galois characters  into the algebraic closure of $\mathbb{Q}_p$.  Consider the homomorphism that sends a character $\rho$ to the $n$-tuple $\left(v_p(\iota_1(\rho(\Frob_p))),\dots,v_p(\iota_n(\rho(\Frob_p)))\right)$, where $v_p$ is the $p$-adic valuation map.  Let $T$ be a fixed generator for the character group  $\widehat{\mathbb{F}_p^{\times}}$, $\chi=T^{(p-1)/d}$ for some positive integer $d$, and $\phi=T^{(p-1)/2}$ be a quadratic character. In the case where $\rho$ is a Jacobi sum character, so that $\rho(\Frob_p)=J(\chi^m,\phi),$ we use Stickelberger's Theorem to compute a matrix whose columns are the images under this homomorphism of the characters appearing in the Tate module.

There is one such embedding for each injective map from the group of characters to the unit circle because there is one embedding for each primitive root of unity (and primitive roots of unity give these maps). We form a matrix of size $n \times k$ with this information, forming one column for each of $k$ pairs of characters in $J(\chi^m,\phi)$, and one row for each of the $n$ embeddings of the group of characters into the circle. The matrix is defined so that the $(j,m)$-th entry is the $p$-adic valuation of the Jacobi sum of the $m$th character under the $j$th embedding. 

Next we formally define this matrix, call it $M$, whose columns are the images under this homomorphism of the  characters appearing in the Tate module.

\begin{definition}\label{def:matrix}
 The matrix $M$ is constructed as follows. We define a map $\phi\colon  \mathbb Z^k \rightarrow \mathbb Z^n$, where $n$ is the number of embeddings, as a composition of two maps $\phi_1$ and $\phi_2$. Given $a=(a_1, a_2,\ldots,a_k) \in \mathbb Z^k$ any $k$-tuple of integers, $\phi_1$ maps $a\mapsto \prod \chi_i^{a_i}$, where the $\chi_i$ are the one dimensional characters coming from the Tate module\footnote{In Section \ref{sec:workedexamples}, the characters $\chi_i$ are Jacobi sums of the form $J(\chi^i,\phi)$. See \ref{sec:pointcount} and \ref{sec:pointcount2} for more detailed descriptions of the Jacobi sums that appear in our examples.}. The second map $\phi_2$ takes this character product to each of $n$ embeddings $\iota_j(\prod \chi_i^{a_i})$ and then computes the $p$-adic valuation of each embedding. The composition of the maps can be expressed as a matrix $M$.
  
  To be more precise, let $$\phi_1\colon \mathbb{Z}^k \rightarrow \widehat{\Gal(\overline{K}/{K})},$$
be the map which sends $a=(a_1,a_2,\dots,a_k)$ to $\prod_{i}\chi_i^{a_i}$. The second map, $$\phi_2\colon \widehat{\Gal(\overline{K}/{K})} \rightarrow \mathbb{Z}^n,$$
combines the embedding and the $p$-adic valuation steps: it sends a character $\rho$ to the $n$-tuple $(v_p(\iota_1(\rho(\Frob_p))),v_p(\iota_2(\rho(\Frob_p))),\dots, v_p(\iota_n(\rho(\Frob_p))))$.  The composition $\phi_2 \circ \phi_1$ forms a matrix $M$ whose  $(j,m)$-th entry is $v_p(\iota_j(\rho(\Frob_p))).$  

\end{definition}

In Sections \ref{sec:workedexamples} and \ref{sec:highergenusex}, we will form this matrix for curves in the three families $C_1, C_2,$ and $C_3$. For curves in each of the three families, the number of points on the curve over the field $\mathbb F_p$ can be expressed as a sum of Jacobi sums (see \ref{sec:pointcount} and \ref{sec:pointcount2}).  Thus, as is the case for Fermat curves in \cite{Saito2004}, the $\ell$-adic representation $\rho(\Frob_p)$ is described by the Jacobi sums that appear in the point count formulas. These Jacobi sums are the eigenvalues of the $\mathbb F_p$-Frobenius endomorphism action on the $\ell$-adic Tate module (see, for example, \cite[Section 2.1]{Ahmadi2015}).

In the case where $\rho$ is a Jacobi sum character, the matrix $M$ has $(j,m)$-th entry  $v_p(\iota_j(J(\chi^m,\phi))).$ Each entry of $M$  is 1 if the angles sum to at least $2\pi$ and zero otherwise.  A method of completing the first row, for $y^2=x^9+c$ and  $p \equiv 1 \pmod 9$, is as follows. A similar argument can be made for the remaining rows, as well as for other curves. 
\begin{lemma}
For $1 \leq m \leq 4$, we have 
$${\ord}_\mathfrak{B}(J(\chi^m, \phi)) = 0,$$
while for $5\leq m \leq 8$, we have 
$${\ord}_\mathfrak{B}(J(\chi^m, \phi))={p-1}.$$
\end{lemma}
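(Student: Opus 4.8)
The plan is to pass from the Jacobi sum to Gauss sums via the relation \eqref{eqn:jacobigauss} and then read off valuations directly from Stickelberger's theorem in the form of Corollary \ref{cor:stickelberger}. Since $\chi^m\phi$ is nontrivial for every $1\le m\le 8$ (the exponents below never reduce to $0$ modulo $p-1$), the relation \eqref{eqn:jacobigauss} gives
$$J(\chi^m,\phi)=\frac{g(\chi^m)\,g(\phi)}{g(\chi^m\phi)}.$$
Because the normalized Gauss sum $G$ of Section \ref{sec:stickelberger} and the Gauss sum $g$ of the background section agree up to the unit $-1$ when $q=p$ (there $\mathrm{Tr}_{\mathbb F_p/\mathbb F_p}=\mathrm{id}$), their $\mathfrak B$-adic valuations coincide, and so
$$\text{ord}_{\mathfrak{B}}\big(J(\chi^m,\phi)\big)=\text{ord}_{\mathfrak{B}}\big(g(\chi^m)\big)+\text{ord}_{\mathfrak{B}}\big(g(\phi)\big)-\text{ord}_{\mathfrak{B}}\big(g(\chi^m\phi)\big).$$
Everything thus reduces to the valuations of three individual Gauss sums.

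Next I would write each character as a power of $\omega_{\mathfrak p}$. With the choice $T=\omega_{\mathfrak{p}}^{-1}$ we have $\chi^m=\omega_{\mathfrak{p}}^{-m(p-1)/9}$ and $\phi=\omega_{\mathfrak{p}}^{-(p-1)/2}$, hence $\chi^m\phi=\omega_{\mathfrak{p}}^{-\left(m(p-1)/9+(p-1)/2\right)}$. To invoke Corollary \ref{cor:stickelberger}, which identifies $\text{ord}_{\mathfrak B}\big(G(\omega_{\mathfrak p}^{-b})\big)$ with the representative of $b$ in the range $0\le b<p-1$, I first reduce each exponent into that range: for $\chi^m$ it is $b=m(p-1)/9$, for $\phi$ it is $b=(p-1)/2$, and for $\chi^m\phi$ it is the reduction of $\tfrac{(2m+9)(p-1)}{18}$ modulo $p-1$.

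The crux of the argument is exactly this last reduction, and it is what separates the two cases. Since $\tfrac{2m+9}{18}<1$ precisely when $2m+9<18$, i.e. $m\le 4$, the exponent $\tfrac{(2m+9)(p-1)}{18}$ already lies in $[0,p-1)$ for $1\le m\le 4$; for $5\le m\le 8$ a single carry occurs and the representative becomes $\tfrac{(2m-9)(p-1)}{18}$. Substituting into the displayed valuation formula, the first case gives
$$\text{ord}_{\mathfrak{B}}\big(J(\chi^m,\phi)\big)=(p-1)\left(\frac{m}{9}+\frac12-\frac{2m+9}{18}\right)=0,$$
while the second gives
$$\text{ord}_{\mathfrak{B}}\big(J(\chi^m,\phi)\big)=(p-1)\left(\frac{m}{9}+\frac12-\frac{2m-9}{18}\right)=p-1.$$
Thus the valuation jumps from $0$ to $p-1$ exactly at the carry. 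The only delicate step is the bookkeeping of this modular reduction of the exponent of $\chi^m\phi$; everything else is the linear arithmetic above together with Corollary \ref{cor:stickelberger}. I would close by noting that, because $\mathfrak p=\mathfrak B^{p-1}$ is totally ramified and $J(\chi^m,\phi)\in\mathbb Q(\zeta_{p-1})$, the value $p-1$ is precisely $\mathfrak p$-adic valuation $1$, which is the ``$1$'' recorded in the statement and yields the desired $0/1$ dichotomy for the first row of the matrix.
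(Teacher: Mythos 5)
Your proposal is correct and takes essentially the same route as the paper: pass from $J(\chi^m,\phi)$ to a quotient of Gauss sums via Equation (\ref{eqn:jacobigauss}), read off the three valuations from Corollary \ref{cor:stickelberger} with $T=\omega_{\mathfrak{p}}^{-1}$, and split into the cases $1\le m\le 4$ versus $5\le m\le 8$ according to whether the exponent of $\chi^m\phi$ carries past $p-1$, i.e.\ whether the reduced exponent is $\tfrac{(2m+9)(p-1)}{18}$ or $\tfrac{(2m-9)(p-1)}{18}$. If anything, your bookkeeping is slightly more careful than the paper's: you check that $\chi^m\phi$ is nontrivial so that (\ref{eqn:jacobigauss}) applies, note that $g$ and the normalized $G$ differ only by a unit, and correctly interpret the exact value $\hbox{ord}_\mathfrak{B}\bigl(J(\chi^m,\phi)\bigr)=p-1$ in the second case as $\mathfrak{p}$-adic valuation $1$ (the paper's displayed ``$\equiv 1 \pmod{p-1}$'' is a slip, since $p-1\equiv 0 \pmod{p-1}$; the intended meaning, as the surrounding discussion of carries makes clear, is exactly the $0/1$ dichotomy you state).
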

\begin{proof}
Using Equation (\ref{eqn:jacobigauss}), we see that 
\begin{align*}
{\ord}_\mathfrak{B}(J(\chi^m, \phi)) &= {\ord}_\mathfrak{B}\left(g\left(\omega_\mathfrak{p}^{-\frac{m(p-1)}{9}}\right)\right) + {\ord}_\mathfrak{B}\left(g\left(\omega_\mathfrak{p}^{-\frac{p-1}{2}}\right)\right) - {\ord}_\mathfrak{B}\left(g\left(\omega_\mathfrak{p}^{-\frac{(2m+9)(p-1)}{18}}\right)\right)\\
& = \left\{ \begin{array}{ll} \frac{m(p-1)}{9} + \frac{p-1}{2} - \frac{(2m+9)(p-1)}{18} = 0 & \hbox{if } 1\leq m \leq 4,\\ \frac{m(p-1)}{9} + \frac{p-1}{2} - \frac{(2m-9)(p-1)}{18} ={p-1} & \hbox{if } 5 \leq m \leq 8, \end{array} \right.
\end{align*}
where the second equality holds by Corollary \ref{cor:stickelberger}.
\end{proof}
This leads us to the following theorem.  
\begin{theorem} \label{theorem:matrix}
Let $M$ be the matrix in Definition \ref{def:matrix} with $(j,m)$-th entry  $v_p(\iota_j(J(\chi^m,\phi)))$. The elements in the kernel of $M$ give the relations between characters $\chi_i$ for $i=1,2,\dots, k$, where  $k\leq g$, that determine the structure of the identity component of the Sato-Tate group of the genus $g$ curves of the form 
 $$C_1\colon y^2=x^{2g+2}+c,\;  \; C_2\colon y^2=x^{2g+1}+cx, \; \; C_3\colon y^2=x^{2g+1} +c.$$
\end{theorem}

  \begin{proof}
Let $C$ be a smooth projective curve defined over $\mathbb Q$. Recall from Section \ref{sec:altmethod1} the Tate module of the Jacobian splits into a sum of one dimensional characters and  $\ST^0(C)$ is dual to the maximal torsion-free quotient of the group generated by these characters which is $G_{\ell}^{1,\Zar}$. Let $M$ be the matrix in Definition \ref{def:matrix}. 

Since the $p$-adic valuations that make up the entries of $M$ are integers, they are not torsion, so the image of $M$ is torsion-free. Recall that the matrix is constructed using a composition of maps, see Definition \ref{def:matrix}. We claim that the kernel of the second map is torsion and so the kernel of the first map is a finite index submodule of the kernel of $M$.   Indeed,  any element in the kernel has $v_p(\iota_j(J(\chi^m,\phi)))=0$ for all $p$-adic valuations.  Moreover,  all $\ell$-adic valuations are zero since $J(\chi^m,\phi))$ acts on the $\ell$-adic Galois representations as an $\ell$-adic unit.  In addition, the absolute value must be one at all infinite places since the absolute value is independent of the complex embedding by Weil's Riemann Hypothesis \cite{Del74} and the product of the absolute value over all complex embeddings vanishes by the product formula.  Hence, $J(\chi^m,\phi)$ is a root of unity; for ease of notation, we will denote it by $\chi_m$. Because this holds for all split $p$, the image of the character consists of roots of unity, so it has finite order.

We can easily determine the elements $a$ in the kernel of the first map by computing the kernel of $M$ since the kernel of the first map is a finite index submodule of the kernel of $M$. Setting $\chi_1^{a_1}\chi_2^{a_2}\cdots\chi_k^{a_k}=1$ for each element $a$ in the kernel of $M$ gives  a set of relations on the characters $\chi_1,...,\chi_k$.
 Thus, we can express the list of characters in the form
$$\{\chi_{b_1}, \overline{\chi_{b_1}}, \ldots, \chi_{b_r}, \overline{\chi_{b_r}}\},$$
where there are $t_i$ copies of each pair $\chi_{b_i}, \overline{\chi_{b_i}}$ for some positive integers $t_i$ satisfying $\sum{t_i} = g$. Note that the characters in this list may not be independent since a character may just be the product of other characters in the list. Thus, a list of independent characters will be  
$$\{\chi_{c_1}, \overline{\chi_{c_1}}, \ldots, \chi_{c_h}, \overline{\chi_{c_h}}\},$$
where there are $r_i$ copies of each pair $\chi_{h_i}, \overline{\chi_{h_i}}$ for some positive integers $r_i$ satisfying $\sum{r_i} \leq g$. 

Since the characters are roots of unity,  we will denote them by $u_j:=\chi_j$ to match the notation of Section \ref{sec:background}. We claim that we can then write
$$\ST^0(C)= \langle\diag(u_{c_1}, \overline{u_{c_1}},\ldots, u_{c_h}, \overline{u_{c_h}})\;|\; u_{c_i} \overline{u_{c_i}}=1\rangle,$$
 where there are $r_i$ copies of each pair $u_{c_i}, \overline{u_{c_i}}$ for some positive integers $r_i$ satisfying $\sum{r_i} \leq g$.  The claim follows since by construction the columns of the matrix are images under the above described homomorphism of the characters appearing in the Tate module, and we have shown the kernel of the first map is a normal finite index subgroup of the kernel of the matrix (see \cite[pg. 31]{SutherlandAWSNotes}).

\end{proof}
 
 \begin{remark}
By \cite[Definition 4.1]{SutherlandAWSNotes} or \cite[Section 8.2]{SerreNXP} each element of the form $\diag(u_{c_1}, \overline{u_{c_1}},\ldots, u_{c_h}, \overline{u_{c_h}})$ is a Hodge circle by Serre's definition and the Hodge circles generate a dense nontrivial subgroup of $\ST^0(C)$. 
 
 \end{remark}
\subsection{Algorithm to Compute $\ST^{0}(C)$}\label{sec:algorithm}

We use this theory to efficiently compute  $\ST^{0}(C)$, for the curves $C_1$, $C_2$, and $C_3$, with the following algorithm. 
\begin{algorithm}\label{algorithm}
\begin {enumerate}
\item  Use Theorems  \ref{theorem:pointcountgeneral} and \ref{theorem:pointcount2}  to determine which characters contribute to the Jacobi sums.  
\item Form the matrix $M$ of Definition \ref{def:matrix}.  For the columns use the  appropriate $J(\chi^m,\phi)$, and for the rows use the embeddings into the circle.  By the composition of the embeddings with $J(\chi^m,\phi)$ we mean take the composition of the embedding with each of the characters $\chi^m$ and $\phi$. The entries in the matrix are 1 if the sum of the angles is at least $2\pi$ and 0 otherwise.
\item Compute the kernel of the matrix $M$.
\item Note that the $p$-adic valuation of the product of any Jacobi sum with its complex conjugate is 1. Use the elements of the kernel to find the additional relations that define the identity component of the Sato-Tate group.
\end{enumerate}
\end{algorithm}

Our work in Section \ref{sec:altmethod1}, Section \ref{sec:stickelberger}, Theorem \ref{theorem:matrix}, Theorem \ref{theorem:pointcountgeneral} and \ref{theorem:pointcount2} proves the following theorem.
\begin{theorem}\label{alg}
Algorithm (\ref{algorithm}) gives the identity component of the Sato-Tate group of curves of the form $$C_1\colon y^2=x^{2g+2}+c,\;  \; C_2\colon y^2=x^{2g+1}+cx, \; \; C_3\colon y^2=x^{2g+1} +c.$$
\end{theorem}

\subsection{Worked Examples}\label{sec:workedexamples}
We now use Algorithm \ref{algorithm} to prove Theorems \ref{genus4} and \ref{thm:x9}.

\begin{proof}[Alternate proof of Theorem \ref{genus4}]
We can use any prime $p\equiv 1 \pmod{10}$, so we choose to work in $ \mathbb{F}_{11}$ to simplify our calculations.  Theorem \ref{theorem:pointcountgeneral} tells us that the Jacobi sums that contribute are of the form $J(T_{10}^m,\phi)$, where $T_{10}=T^{(p-1)/10}$ and where $m$ ranges over all values from 1 to 9. The four embeddings from the group of characters to the unit circle  are given by $$T_{10} \rightarrow e^{\pi i/5} ,~~ T_{10} \rightarrow e^{3\pi i/5},~~T_{10}\rightarrow e^{7\pi i/5}, ~~ T_{10} \rightarrow e^{9\pi i/5}.$$ 
We compute the matrix described in Algorithm \ref{algorithm}. Its kernel is given by
  
$$\Span \left\{\begin{pmatrix} 1\\0\\0\\0\\-1\\0\\0\\0 \\1\\
\end{pmatrix},
\begin{pmatrix} 0\\1\\0\\0\\-1\\0\\0\\1\\0\\ 
\end{pmatrix},\begin{pmatrix} 0\\1\\0\\0\\-1\\0\\1\\0\\0\\
\end{pmatrix},\begin{pmatrix} 1\\0\\0\\0\\-1\\1\\0\\0\\0\\
\end{pmatrix},
\begin{pmatrix} -1\\0\\0\\1\\0\\0\\0\\0\\0\\
\end{pmatrix}
\begin{pmatrix} 0\\-1\\1\\0\\0\\0\\0\\0\\0\\
\end{pmatrix}\right\}.$$

Let $\chi_i=J(T^i_{10},\phi)$, so that each vector in the kernel is a tuple of exponents for the characters $$\chi_1, \chi_2, \chi_3, \chi_4,\chi_5, \chi_6, \chi_7, \chi_8, \chi_9.$$ The $p$-adic valuation of the product of any Jacobi sum with its complex conjugate is 1 and so, for example, $\chi_1\chi_9=1$. The additional relations are as follows. From the first vector,  $\chi_1\chi_5^{-1}\chi_9=1$ so $\chi_5=1$.  Similarly, from the second vector, $\chi_8=\chi_2^{-1}$. From the third vector, $\chi_7=\chi_2^{-1}$.   From the fourth vector, $\chi_6=\chi_1^{-1}$. From the fifth vector, $\chi_4=\chi_1$. Finally, from the sixth vector $\chi_3=\chi_2$. 
Thus,  
 $$\chi_1, \chi_2, \chi_3, \chi_4, \chi_6, \chi_7, \chi_8, \chi_9 = \chi_1,\chi_2,\chi_2,\chi_1,\chi_1^{-1},\chi_2^{-1},\chi_2^{-1},\chi_1^{-1}$$ and the identity component  of the Sato-Tate group of the Jacobian of $y^2=x^{10}+c$ is  $\U(1)_2 \times \U(1)_2.$
\end{proof}

\begin{theorem}\label{thm:x9b}
The identity component of the Sato-Tate group of the hyperelliptic curve $C\colon y^2=x^{9}+c$ is $\U(1) \times \U(1) \times \U(1).$
\end{theorem}
\begin{proof}
We can use any prime $p\equiv 1 \pmod{9}$, so we choose to work in $ \mathbb{F}_{19}$ to simplify our calculations.  Corollary \ref{coro:d=9} tells us that the Jacobi sums that contribute are $J(T_{9}^m,\phi)$, where $T_{9}=T^{(p-1)/9}$ and where $m$ ranges over all values from 1 to 8. Note that $T_9=T^{\frac{p-1}{9}}=T^2$, so we are only considering even powers of $T$. 

We have six embeddings into the circle, given by the primitive roots of unity  $e^{2\pi ik/9}$, where $\gcd(k,p-1)=1$. We compute the matrix $M$ described in Algorithm \ref{algorithm}. Its kernel is given by

$$\Span \left\{\begin{pmatrix} 1\\-1\\-1\\1\\0\\0\\0\\0
\end{pmatrix},
\begin{pmatrix}1\\-1\\0\\0\\-1\\1\\0\\0\\
\end{pmatrix},
\begin{pmatrix}1\\0\\-1\\0\\-1\\0\\1\\0\\
\end{pmatrix},
\begin{pmatrix}2\\-1\\-1\\0\\-1\\0\\0\\1\\
\end{pmatrix}
\right\}.$$
Let $\chi_i=J(T^i_{9},\phi)$, so that each vector in the kernel is a tuple of exponents for the characters $\chi_1, \chi_2, \chi_3, \chi_4, \chi_5, \chi_6, \chi_7, \chi_8$. The $p$-adic valuation of the product of any Jacobi sum with its complex conjugate is 1 and so, for example, $\chi_1\chi_8=1$. The additional relations are as follows. From the first vector, $\chi_4=\chi_2\chi_3\chi_1^{-1}$; from the second vector, $\chi_6=\chi_2\chi_5\chi_1^{-1}$; from the third vector,
$\chi_7=\chi_3\chi_5\chi_1^{-1}$; from the last vector,
$\chi_8=\chi_2\chi_3\chi_5\chi_1^{-2}$. 
Furthermore, since $\chi_1\chi_8=1$,  the relation from last vector can be written as $\chi_1=\chi_2\chi_3\chi_5$.   Substituting $\chi_1=\chi_2\chi_3\chi_5$ into $\chi_4=\chi_2\chi_3\chi_1^{-1}$ yields $\chi_5=\chi_4^{-1}.$ Repeating this process with the other relations yields  $\chi_6=\chi_3^{-1}$ and $\chi_7=\chi_2^{-1}$.   Hence, all characters can be written in terms of $\chi_2,\chi_3$, and $\chi_5$.

Putting this together we have  $$\chi_2,\chi_3,\chi_5,\chi_2^{-1},\chi_3^{-1},\chi_5^{-1}.$$ Thus, the identity component of the Sato-Tate group  of the Jacobian of $y^2=x^9+c$ is $$\U(1) \times \U(1) \times \U(1).$$

\end{proof}
As noted in the introduction, the identity component of the Sato-Tate group over $\mathbb{Q}$ is isomorphic to the Sato-Tate group over the CM field of the Jacobian of the curve.   To determine the Sato-Tate distribution, we need an explicit description of the embedding of the Sato-Tate of the Jacobian of the curve into $\USp(8)$ (see, for example, \cite[Remark 4.1]{Fite2016}).  Since $\chi_1=\chi_2\chi_3\chi_5$ and $\chi_8=\chi_1^{-1}$ we have the following embedding into $\USp(8)$
\begin{align*}
\U(1) \times \U(1) \times \U(1)& \simeq \langle \diag(u_1,\overline{u}_1,u_2,\overline{u}_2,u_3,\overline{u}_3)\rangle\\
&\simeq \langle \diag(u_1,\overline{u}_1,u_2,\overline{u}_2,u_3,\overline{u}_3,u_4(u_1,u_2,u_3),\overline{u_4(u_1,u_2,u_3)} )\rangle \\
&\simeq \ST^0(C_{\mathbb{Q}})\simeq \ST(C_F) \subseteq \USp(8)
\end{align*}
where we view $u_4$ as a function of $u_1,u_2,u_3$ and $F$ is the minimal extension over which all endomorphisms of the Jacobian of $y^2=x^9+c$ are defined.

\begin{corollary}\label{cor:c18}
The identity component of the Sato-Tate group of the Jacobian of the hyperelliptic curve $y^2=x^{18} +c$ is $\U(1)_2 \times \U(1)_2 \times \U(1)_2.$

\end{corollary}
\begin{proof}
Let $C\colon y^2=x^{18}+c$.  From Lemma \ref{evencase} 
$\Jac(C) \sim \Jac(C')$
where $C'\colon y^2=x^9+c$ and the result follows from Theorem \ref{thm:x9b}.  Alternatively, one can use Algorithm \ref{algorithm}.
\end{proof}

\subsection{Higher Genus Examples and Conjectures} \label{sec:highergenusex}
Using Algorithm \ref{algorithm} we compute additional examples of the identity component of the Sato-Tate group and formulate  conjectures for curves of the form  $$C_1\colon y^2=x^{2g+2}+c,  C_2\colon y^2=x^{2g+1}+cx,  C_3\colon y^2=x^{2g+1} +c$$
where $g$ is the genus of the curve and $c\in \mathbb{Q}^*$ is a constant.
As   previously stated, the   calculations for Algorithm \ref{algorithm} can be implemented in Sage \cite{Sage}.  Using Algorithm \ref{algorithm} we obtain the following.

\begin{table}[h]\caption{Identity components $\ST^0(C_1)$ for  genus 2 - 10.}
\begin{tabular}{c|l|l} 
    genus  of $C_1$& curve $C_1$&$\ST^0(C_1)$\\
    \hline 
    2& $y^2=x^6+c$&$\U(1)_2$\\
    3 & $y^2=x^8+c$&$\U(1)_2 \times \U(1)$\\
    4& $y^2=x^{10}+c$& $\U(1)_2 \times \U(1)_2$\\
    5 & $y^2=x^{12}+c$ &$ \U(1)_3 \times \U(1)_2$\\
    6& $y^2=x^{14}+c$& $\U(1)_2 \times \U(1)_2 \times \U(1)_2$\\
    7 & $y^2 =x^{16}+c $ & $\U(1)_2 \times \U(1)_2 \times \U(1)_2 \times \U(1)$\\
    8& $y^2=x^{18}+c$& $\U(1)_2 \times \U(1)_2 \times \U(1)_2$\\
    9 & $y^2 =x^{20}+c$ &$ \U(1)_4 \times \U(1)_2 \times \U(1)_2 \times \U(1)$\\ 
    10& $y^2=x^{22}+c$& $\U(1)_2 \times \U(1)_2 \times \U(1)_2 \times \U(1)_2 \times \U(1)_2$\\
    \end{tabular}
    
\end{table}

Note that the genus 2 example is also handled in \cite{Fite2012}, the genus 3 example is also handled in \cite [Cor. 5.3]{Fite2016}, and the genus 4 and 5 examples are worked out in Section \ref{sec:genus45proof} of this paper. This gives evidence for the following conjecture.
\begin{conjecture}
Let $C_{2p}\colon  y^2=x^{2p}+c$ where $p \geq 2$ is  prime.  Then 
$$\ST^0(C_{2p})=\underbrace{\U(1)_2\times \U(1)_2 \times \cdots \times \U(1)_2}_{(p-1)/2 \text{-times}}.$$
\end{conjecture}

We use Algorithm \ref{algorithm} again to compute the identity component of the Sato-Tate group for curves of the form $C_2\colon  \, y^2=x^{2g+1}+c$ and obtain the following.

\begin{table}[h]\caption{Identity components $\ST^0(C_2)$ for  genus 2 - 10.}
\begin{tabular}{c|l|l} 
    genus  of $C_2$& curve $C_2$&$\ST^0(C_2)$\\
    \hline 
    2 & $y^2=x^5+c$&$\U(1)^2$\\
    3 & $y^2=x^{7}+c$ &$ \U(1)^3$\\
    4 & $y^2 =x^{9}+c $ & $\U(1)^3$\\
    5 & $y^2 =x^{11}+c$ &$ \U(1)^5$\\ 
    6 & $y^2 =x^{13}+c $ & $\U(1)^6$\\
    7 & $y^2 =x^{15}+c$ & $\U(1)^4$\\ 
    8 & $y^2 =x^{17}+c$ &$  \U(1)^8$\\ 
    9& $y^2=x^{19}+c$&  $\U(1)^9$\\
    10& $y^2=x^{21}+c$ & $\U(1)^7$\\
\end{tabular}
    \end{table}
Note that the genus 2 example is also handled in \cite{Fite2012}. This gives evidence for the following conjecture.
\begin{conjecture}
Let  $C_p\colon  y^2=x^{p}+c$, where $p \geq 5$ is prime. Then 
$$\ST^0(C_p)=\U(1)^{(p-1)/2}.$$
\end{conjecture}
We also use Algorithm \ref{algorithm} to compute the identity component of the Sato-Tate group for curves of the form $C_3\colon  \,y^2=x^{2g+1}+cx$. We have the following results.
\begin{table}[h]    \caption{Identity components $\ST^0(C_3)$ for  genus 2 - 10.}
\begin{tabular}{c|l|l} 
    genus  of $C_3$& curve $C_3$&$\ST^0(C_3)$\\
    \hline 
    2 & $y^2=x^5+cx$&$\U(1)_2$\\
    3 & $y^2=x^{7}+cx$ &$ \U(1)_3$\\
    4 & $y^2 =x^{9}+cx $ & $\U(1)_2 \times \U(1)_2$\\
    5 & $y^2 =x^{11}+cx$ &$ \U(1)_5$\\ 
    6 & $y^2 =x^{13}+cx $ & $\U(1)_4 \times \U(1)_2$\\
    7 & $y^2 =x^{15}+cx$ &$ \U(1)_7$\\ 
    8 & $y^2 =x^{17}+cx$ &$  \U(1)_2 \times \U(1)_2 \times \U(1)_2 \times \U(1)_2$\\ 
    9& $y^2=x^{19}+cx$&  $\U(1)_9.$\\
    10&$y^2=x^{21}+cx$&$\U(1)_2 \times \U(1)_2 \times \U(1)_2 \times \U(1)_2$\\
\end{tabular}
\end{table}

Note that the genus 2 example is also handled in \cite{Fite2012} and the genus 3 example is also handled in \cite [Cor. 5.3]{Fite2016}. This gives evidence for our following conjecture.
\begin{conjecture}
Let  $C_3\colon  y^2=x^{2g+1}+cx$, where the genus $g=2k+1$ is odd. Then 
$$\ST^0(C_3)=\U(1)_g.$$
\end{conjecture}

\section*{Acknowledgments }  The authors would like to thank the Arizona Winter School and Andrew Sutherland for providing the research experience where this project began.   We give our heartfelt thanks to Francesc Fit\'e for his guidance and patience.    The authors also warmly thank Will Sawin for suggesting Algorithm \ref{algorithm} and for subsequent helpful discussions. We also thank Christelle Vincent, Holley Friedlander, and Fatma Cicek for their help with the computations in Section \ref{sec:highergenusex} during SageDays 103. We thank David Zywina for  discussing his results on the curve mentioned in Theorem \ref{thm:x9}. Finally, we thank the reviewer for their very thorough and helpful comments.

AP is supported by the Swiss National Science Foundation grant P2ELP2 172089. He was supported by the Simons Investigators Grant of Kannan Soundararajan during his time at Stanford University, when this paper was written.

\appendix

\section{Point Count Computation: $y^2=x^d+c$}
\label{sec:pointcount}

We have the following theorem regarding the point count of curves of the form $y^2=x^d+c$.

\begin{theorem}\label{theorem:pointcountgeneral}
Let $C_d\colon   y^2=x^{d}+c$ and let $p$ be an odd prime. Furthermore, let $T$ be a  fixed generator for the character group $\widehat{\mathbb F_p^{\times}}$ and $\phi=T^\frac{p-1}{2}$ be a quadratic character. Then
$$\#C_d(\mathbb F_p)  = p+1 + \sum_{m} \overline{T_d^m}(-c)\phi(c)J(\overline{T_d^m},\phi),$$
where the sum is over all $m\in\mathbb Z$ such that $\frac{(p-1)m}{d}\in [1, p-2]$ is integral, and $T_d^m=T^{\frac{m(p-1)}{d}}$.
\end{theorem}

\begin{remark}The number of terms in the point count formula partly depends on the congruence class of $p$. For example, if $p\equiv 1\pmod{d}$ then we will sum over the entire interval $[1,d-1]$. If $p-1$ and $d$ are relatively prime, then this summand will be empty. When $p-1$ and $d$ share at least some factors (for example, if $d$ is even) then there will be some terms that arise from this summand since we will be able to cancel the remaining factors in the denominator of $\frac{(p-1)m}{d}$ with some $m\in\mathbb Z$.

In the case where $p-1$ and $d$ are relatively prime, no $m\in\mathbb Z$ will yield a fraction $\frac{(p-1)m}{d}$ in the correct interval. To see why this is true, note that we would need $m=db$, for some positive $b\in\mathbb Z$, in order to have $\frac{(p-1)m}{d}\in\mathbb Z$. But this yields $$\tfrac{(p-1)m}{d}=(p-1)b\geq p-1,$$ which is not in the required interval.  Hence, in this case, the number of points will simply be $p+1$.
\end{remark}

\begin{proof}
Throughout, assume that $p$ is an odd prime. We follow the method of proof used in \cite{Fuselier10} to compute the number of points on a family of elliptic curves. Let $P(x,y)=x^{d}+c-y^2$. Recall from Section \ref{sec:gaussjacobi} that we define the additive character $\theta$ on $\mathbb F_p$ by $\theta(x)=\zeta^x$, where $\zeta$ is a primitive $p$-th root of unity. Since $\theta$ is an additive character $\theta(0)=1$, we have that
\[ \sum_{z\in\mathbb F_p} \theta(zP(x,y))= \begin{cases}
p& \text{if }P(x,y)=0,\\
0 &\text{otherwise.}
\end{cases}\]
Hence,
$$p\cdot(\#C_d(\mathbb F_p)-1) =  \sum_{z\in\mathbb F_p} \sum_{x,y\in\mathbb F_p} \theta(zP(x,y)).$$
Note that when $z=0$, $\sum_{x,y\in\mathbb F_p} \theta(0\cdot P(x,y))=p^2$. We break up the sum as follows
\begin{align*}
	 \sum_{x,y,z\in\mathbb F_p} \theta(zP(x,y)) &= p^2 +  \sum_{z\in\mathbb F_p^\times} \theta(zP(0,0))+ \sum_{y,z\in\mathbb F_p^\times} \theta(zP(0,y))\\&\hspace{.1in}+ \sum_{x,z\in\mathbb F_p^\times} \theta(zP(x,0))+ \sum_{x,y,z\in\mathbb F_p^\times} \theta(zP(x,y))\\
	 &:=p^2+A+B+C+D.
\end{align*}
 We will use Lemma \ref{lemma:charsum} and properties of Gauss sums  to evaluate each of these sums.

{\bf Computing A}:
\begin{align*}
 A=\sum_{z\in\mathbb F_p^\times} \theta(zP(0,0))&=  \sum_{z\in\mathbb F_p^\times} \theta(zc)\\
 								       &= \frac{1}{p-1} \sum_{z\in\mathbb F_p^\times} \sum_{i=0}^{p-2} G_{-i}T^i(zc)\\
								       &= \frac{1}{p-1} \sum_{i=0}^{p-2} G_{-i}T^i(c)\sum_{z\in\mathbb F_p^\times} T^i(z)\\
								       &=-1,
\end{align*}
since $\sum_{z\in\mathbb F_p^\times} T^i(z)=0$ unless $i=0$, in which case it equals $p-1$, and $G_0=-1$.\\

{\bf Computing B}:
\begin{align*}
B=\sum_{y,z\in\mathbb F_p^\times} \theta(zP(0,y))&=  \sum_{y,z\in\mathbb F_p^\times} \theta(zc)\theta(-zy^2)\\
			 &= \frac{1}{(p-1)^2} \sum_{i,j=0}^{p-2} G_{-i}G_{-j}T^i(c)T^j(-1)\sum_{z\in\mathbb F_p^\times} T^{i+j}(z)\sum_{y\in\mathbb F_p^\times} T^{2j}(y).
\end{align*}
Note that $\sum_{y\in\mathbb F_p^\times} T^{2j}(y)=0$ unless $j=0$ or $j=\frac{p-1}2$. In either case, $\sum_{z\in\mathbb F_p^\times} T^{i+j}(z)=0$ unless $i=j$. Hence, letting $\phi=T^{\frac{p-1}2}$,
\begin{align*}
B&=G_{0}G_{0}T^0(c)T^0(-1)+G_{\frac{p-1}2}G_{\frac{p-1}2}\phi(c)\phi(-1)\\
  &=1+p\phi(c),
\end{align*}
since $G_{\frac{p-1}2}G_{\frac{p-1}2} = p\phi(-1)$.\\

{\bf Computing C}:
\begin{align*}
C=\sum_{x,z\in\mathbb F_p^\times} \theta(zP(x,0))&=  \sum_{x,z\in\mathbb F_p^\times} \theta(zx^{d})\theta(zc)\\
			 &= \frac{1}{(p-1)^2} \sum_{i,j=0}^{p-2} G_{-i}G_{-j}T^j(c)\sum_{z\in\mathbb F_p^\times} T^{i+j}(z)\sum_{x\in\mathbb F_p^\times} T^{id}(x).
\end{align*}
We will not break this down further since this will cancel with part of sum $D$.\\

{\bf Computing D}:

\begin{align*}
D&=\sum_{x,y,z\in\mathbb F_p^\times} \theta(zP(x,y))\\
	&=  \sum_{x,y,z\in\mathbb F_p^\times} \theta(zx^{d})\theta(zc)\theta(-zy^2)\\
	&= \frac{1}{(p-1)^3} \sum_{i,j,k=0}^{p-2} G_{-i}G_{-j}G_{-k}T^j(c)T^k(-1)\sum_{z\in\mathbb F_p^\times} T^{i+j+k}(z)\sum_{x\in\mathbb F_p^\times} T^{id}(x)\sum_{y\in\mathbb F_p^\times} T^{2k}(y).
\end{align*}
As before, $\sum_{y\in\mathbb F_p^\times} T^{2k}(y)=0$ unless $k=0$ or $k=\frac{p-1}2$. Note that the case where $k=0$ negates the expression we found for $C$ since $G_{-k}=-1$ when $k=0$. We will denote the term with $k=\frac{p-1}2$ as $D'$. We break this term down further as follows.
\begin{align*}
D'&=\frac{1}{(p-1)^2} \sum_{i,j=0}^{p-2} G_{-i}G_{-j}G_{\frac{p-1}2}T^j(c)T^{\frac{p-1}2}(-1)\sum_{z\in\mathbb F_p^\times} T^{i+j+\frac{p-1}2}(z)\sum_{x\in\mathbb F_p^\times} T^{id}(x).
\end{align*}

The sum $\sum_{z\in\mathbb F_p^\times} T^{i+j+\frac{p-1}2}(z)=0$ unless $j=\frac{p-1}2-i$. Hence,
\begin{align*}
D'&=\frac{1}{p-1} \sum_{i=0}^{p-2} G_{-i}G_{-(\frac{p-1}2-i)}G_{\frac{p-1}2}T^{\frac{p-1}2-i}(c)T^{\frac{p-1}2}(-1)\sum_{x\in\mathbb F_p^\times} T^{id}(x).
\end{align*}

The sum $\sum_{x\in\mathbb F_p^\times} T^{id}(x)=0$ unless $i=0$ or $i$ is a multiple of $\frac{p-1}{d}$, i.e. $i=\frac{(p-1)m}{d}\in[0,p-2]$ for some $m\in\mathbb Z$. Hence,
\begin{align*}
D'&=G_{0}G_{\frac{p-1}2}G_{\frac{p-1}2}T^{\frac{p-1}2}(c)T^{\frac{p-1}2}(-1)\\&\hspace{.5in}+\sum_{m} G_{-m\frac{p-1}{d}}G_{m\frac{p-1}{d}-\frac{p-1}2}G_{\frac{p-1}2}T^{\frac{p-1}2-m\frac{p-1}{d}}(c)T^{\frac{p-1}2}(-1)\\
&=-p\phi(c)+\sum_{m} G_{-m\frac{p-1}{d}}G_{m\frac{p-1}{d}-\frac{p-1}2}G_{\frac{p-1}2}T^{-m\frac{p-1}{d}}(c)\phi(-c).
\end{align*}
 
Note that the term $-p\phi(c)$ will cancel with part of the expression in sum $B$. Letting $T_d^m =T^{\frac{m(p-1)}{d}}$  and recalling that $G_{a}:=g(T^a)$, we can write the above expression as
\begin{align*}
D'&=-p\phi(c)+\sum_{m} g(\overline{T_d^m})g(T_d^m\phi)g(\phi)\overline{T_d^m}(c)\phi(-c).
\end{align*}

Note that, for any nontrivial character  $A\not=\phi$, 
\begin{align*}
	g(\overline A)g(A\phi)g(\phi)&= g(\overline A)g(A\phi)g(\phi)\cdot \frac{g(\overline A\phi)}{g(\overline A\phi)}\\
						  &=\overline A\phi(-1)p	 \frac{g(\overline A)g(\phi)}{g(\overline A\phi)}\\
						  &=\overline A\phi(-1)pJ(\overline A,\phi),
\end{align*}
where the last equality holds by Equation \ref{eqn:jacobigauss}. On the other hand, if $A=\phi$, then 
\begin{align*}
	g(\overline A)g(A\phi)g(\phi)&= g(\phi)g(\epsilon)g(\phi)\\
						  &=-p\phi(-1)\\
						  &=\overline A\phi(-1)pJ(\overline A,\phi),
\end{align*}
where the last equality holds because $J(\phi,\phi)=-\phi(-1)$. Hence, for any nontrivial character $A$,
\begin{equation}\label{eqn:gaussjacobi}
g(\overline A)g(A\phi)g(\phi)=\overline A\phi(-1)pJ(\overline A,\phi).
\end{equation}
We use this to rewrite $D'$ as
\begin{align*}
      D'&=-p\phi(c)+p\sum_{m} \overline{T_d^m}(-c)\phi(c)J(\overline{T_d^m},\phi).
\end{align*}

We now combine these results to get
\begin{align*}
	\#C_d(\mathbb F_p) &= 1+ \frac1p\left(p^2+A+B+C+D\right)\\
				       &= 1+ \frac1p\left(p^2+p\sum_{m} \overline{T_d^m}(-c)\phi(c)J(\overline{T_d^m},\phi)\right)\\
				       &=p+1+\sum_{m} \overline{T_d^m}(-c)\phi(c)J(\overline{T_d^m},\phi),
\end{align*}
where  the sum is over all $m\in\mathbb Z$ such that $\frac{(p-1)m}{d}\in[1,p-2]$ is integral. 
\end{proof}

\begin{corollary}\label{coro:d=9}
The number of points on the curve $y^2=x^9+c$ over $\mathbb F_p$ is 
\[\#{\small C_9(\mathbb F_p)  = 
\begin{cases}
p+1 + \sum_{m=1}^{8} \overline{T_9^m}(-c)\phi(c)J(\overline{T_9^m},\phi) & \text{if  }  p\equiv 1 \pmod 9\\\\
p+1 + \overline{T_9^{3}}(-c)\phi(c)J(\overline{T_9^{3}},\phi)+\overline{T_9^{6}}(-c)\phi(c)J(\overline{T_9^{6}},\phi) & \text{if  }  p\equiv 4,7 \pmod 9\\\\
p+1 & \text{if } p\equiv 2\pmod 3 \text{ or } p=3.
\end{cases}}\]
\end{corollary}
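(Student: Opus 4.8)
The plan is to specialize Theorem~\ref{theorem:pointcountgeneral} to $d=9$ and then run a divisibility analysis that determines exactly which integers $m$ contribute to the sum. Theorem~\ref{theorem:pointcountgeneral} gives
$$|C_9(\mathbb F_p)| = p+1 + \sum_{m} \overline{T_9^m}(-c)\,\phi(-c)\,J(T_9^m,\phi),$$
where $m$ ranges over those integers for which $\frac{(p-1)m}{9}$ is an integer lying in $[1,p-2]$. Since this interval consists of positive integers, only $m>0$ can occur, and an integer value $\frac{(p-1)m}{9}$ arises precisely when $9\mid (p-1)m$; thus the whole problem reduces to tracking $\gcd(9,p-1)$, which depends only on $p$ modulo $9$.

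First I would split into the three possibilities for $\gcd(9,p-1)$. If $p\equiv 1\pmod 9$, then $9\mid p-1$ and $\frac{p-1}{9}\in\mathbb Z$; letting $m$ run over $1,\dots,8$ gives the integers $\frac{(p-1)m}{9}$, and a short inequality check confirms each lies in $[1,p-2]$ while $m=0$ and $m=9$ fall outside, producing the first case. If $p\equiv 4,7\pmod 9$, then $\gcd(9,p-1)=3$; writing $p-1=3s$ with $3\nmid s$, the condition $9\mid 3sm$ forces $3\mid m$, so only $m=3,6$ survive, where $T_9^3=T^{(p-1)/3}$ and $T_9^6=T^{2(p-1)/3}$ are honest characters. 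If instead $p\equiv 2\pmod 3$ or $p=3$, then $\gcd(9,p-1)=1$, so $9\mid (p-1)m$ forces $9\mid m$, which makes $\frac{(p-1)m}{9}$ a positive multiple of $p-1$ and hence outside $[1,p-2]$; the sum is then empty and the count collapses to $p+1$.

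The last step is a short cleanup of the sign factors in the middle case. Using $\overline{T_9^m}(-c)=\overline{T_9^m}(-1)\,\overline{T_9^m}(c)$ and $T(-1)=-1$, I would compute $\overline{T_9^6}(-1)=(-1)^{2(p-1)/3}=1$, since $(p-1)/3$ is an integer and $2(p-1)/3$ is even; this lets me rewrite $\overline{T_9^6}(-c)$ as $\overline{T_9^6}(c)$ so as to match the stated formula verbatim.

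I expect the only real subtlety to be the careful treatment of the endpoints of $[1,p-2]$ and the verification that the residue conditions partition the odd primes cleanly, in particular that $p\equiv 4,7\pmod 9$ are exactly the primes with $p\equiv 1\pmod 3$ but $p\not\equiv 1\pmod 9$. These checks are elementary, but they must be done with care so that no contributing character is dropped or counted twice.
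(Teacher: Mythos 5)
Your proposal is correct and takes essentially the same approach as the paper's proof: specialize Theorem~\ref{theorem:pointcountgeneral} to $d=9$ and sort the admissible $m$ by the residue of $p$ modulo $9$ (equivalently $\gcd(9,p-1)$), yielding $m\in\{1,\dots,8\}$ when $p\equiv 1\pmod 9$, $m\in\{3,6\}$ when $p\equiv 4,7\pmod 9$, and an empty sum when $p\equiv 2\pmod 3$ or $p=3$. Your additional sign check that $\overline{T_9^6}(-c)=\overline{T_9^6}(c)$, since $T(-1)=-1$ and $2(p-1)/3$ is even, is a correct bonus the paper's proof omits: it reconciles the asymmetric appearance of $(-c)$ versus $(c)$ in the corollary's middle case with the uniform formula of the theorem.
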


\begin{proof}
For this result we are merely applying Theorem \ref{theorem:pointcountgeneral} to the case where $d=9$. We need to determine when $\frac{(p-1)m}{9}\in[1,p-2]$ is integral.

If $p\equiv 1 \pmod 9$, then any integer $m$ will make $\frac{p-1}{9}m$ integral. We restrict $m$ to the interval $[1,8]$ so that $\frac{p-1}{9}m\in[1,p-2]$.

On the other hand, if  $p\equiv 4,7 \pmod 9$, i.e. $p\equiv 1 \pmod 3 $ and $ p\not\equiv1\pmod 9$, then any integer of the form $m=3b$, where $b\in\mathbb Z$, will make $\frac{p-1}{3}\frac{m}{3}$ integral. We restrict $m$ to the interval $[1,8]$ so that $\frac{p-1}{3}\frac{m}{3}\in[1,p-2]$. Hence, only $m=3$ and $m=6$ will contribute to the point count sum.

Finally, if $p\equiv 2\pmod 3$ then no $m\in\mathbb Z$ will yield a fraction $\frac{(p-1)m}{d}$ in the correct interval.
\end{proof}

\section{Point Count Computation: $y^2=x^d+cx$}
\label{sec:pointcount2}

\begin{theorem}\label{theorem:pointcount2}
Let $C_d\colon   y^2=x^{d}+cx$ and let $p$ be an odd prime. Furthermore, let $T$ be a fixed generator for the character group $\widehat{\mathbb F_p^{\times}}$ and $\phi=T^\frac{p-1}{2}$ be a quadratic character. Then
$$\#(C_d(\mathbb F_p))  = p +1+\sum_{m} \overline{T_{d'}^{2m+1}}(-c)\phi(c)J(T_{d'}^{2m+1},\phi),$$
where the sum is over all $m\in\mathbb Z$ such that $\frac{(2m+1)(p-1)}{2(d-1)}\in[0,p-2]$ is integral (so that $T_{d'}^{2m+1}:=T^\frac{(2m+1)(p-1)}{2(d-1)}$ is a character).
\end{theorem}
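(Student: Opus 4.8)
The plan is to mirror the proof of Theorem~\ref{theorem:pointcountgeneral} almost verbatim, tracking the two places where the linear term $cx$ (in place of the constant $c$) alters the computation. First I would set $P(x,y) = x^d + cx - y^2$ and write, exactly as before,
$$|C_d(\mathbb F_p)| = 1 + \frac1p\sum_{z\in\mathbb F_p}\sum_{x,y\in\mathbb F_p}\theta(zP(x,y)),$$
isolate the $z=0$ contribution $p^2$, and decompose the remainder as $A+B+C+D$ according to whether $x$ and $y$ vanish, where $A,B,C,D$ are the sums over $z\in\mathbb F_p^\times$ with $(x,y)=(0,0)$, $x=0\neq y$, $y=0\neq x$, and $x,y\in\mathbb F_p^\times$ respectively.

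The first new feature appears in the boundary sums. Since now $P(0,0)=0$ rather than $c$, one gets $A = \sum_{z\in\mathbb F_p^\times}\theta(0) = p-1$; and since $P(0,y) = -y^2$ carries no $c$-dependence, the quadratic Gauss sum evaluation $\sum_{y\in\mathbb F_p^\times}\theta(-zy^2) = \phi(-z)g(\phi)-1$ combined with $\sum_{z\in\mathbb F_p^\times}\phi(-z)=0$ gives $B = -(p-1)$. Thus $A+B=0$, in contrast with the $1+p\phi(c)$ that arose in Theorem~\ref{theorem:pointcountgeneral}. For $C$ and $D$ I would expand $\theta(z(x^d+cx)) = \theta(zx^d)\theta(zcx)$ using Lemma~\ref{lemma:charsum}; the crucial change is that the multiplicative character attached to $x$ is now $T^{id+j}$ (the $j$ coming from the linear factor $\theta(zcx)$) rather than $T^{id}$. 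As in the constant-term case, the $k=0$ term of $D$ exactly cancels $C$, so only the term $D'$ with $k=\tfrac{p-1}{2}$ survives.

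The heart of the argument is the orthogonality in $D'$. The $z$-sum forces $j \equiv -i-\tfrac{p-1}{2}$, and then $\sum_x T^{id+j}(x)$ is nonzero precisely when $i(d-1)\equiv\tfrac{p-1}{2}\pmod{p-1}$, i.e.\ when $i = \tfrac{(2m+1)(p-1)}{2(d-1)}\in[0,p-2]$ for some integer $m$. This is exactly where the shape $T_{d'}^{2m+1} = T^{(2m+1)(p-1)/(2(d-1))}$ of the theorem comes from: the denominator $2(d-1)$ and the odd numerators $2m+1$ are forced by the linear term together with the surviving quadratic character $\phi$. Substituting $j=-i-\tfrac{p-1}{2}$ turns the Gauss factors into $g(\overline{T^i})g(T^i\phi)g(\phi)$, and applying the Gauss--Jacobi identity in Equation \ref{eqn:gaussjacobi} yields $D' = p\sum_m \overline{T_{d'}^{2m+1}}(-c)\,\phi(c)\,J(\overline{T_{d'}^{2m+1}},\phi)$.

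Finally I would assemble $|C_d(\mathbb F_p)| = 1 + \tfrac1p(p^2 + A+B+C+D) = p+1+\tfrac1p D'$ and reindex $m$ to replace $\overline{T_{d'}^{2m+1}}$ by $T_{d'}^{2m+1}$ inside the Jacobi sum, which is legitimate because, as in Theorem~\ref{theorem:pointcountgeneral}, the index set is stable under negation of the character index (concretely $m\mapsto d-2-m$). I expect the main obstacle to be the bookkeeping in the preceding paragraph: correctly propagating the extra index $j$ through the exponent of $x$ so that the divisibility condition becomes $i(d-1)\equiv\tfrac{p-1}{2}$, and confirming that the $C$/$D$ cancellation is undisturbed by this shift. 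Everything else is a direct transcription of the constant-term computation.
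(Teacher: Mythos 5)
Your proposal is correct and follows the paper's own proof essentially step for step: the same decomposition $p^2+A+B+C+D$, the boundary evaluations $A=p-1$ and $B=-(p-1)$, the cancellation of $C$ against the $k=0$ piece of $D$, the orthogonality forcing $j\equiv\frac{p-1}{2}-i$ and $i(d-1)\equiv\frac{p-1}{2}\pmod{p-1}$, the Gauss--Jacobi identity of Equation~\ref{eqn:gaussjacobi}, and the final reindexing under negation of the character index (which the paper justifies exactly as you do, via stability of the index set). The only deviations are cosmetic --- you evaluate $B$ directly from the quadratic character count $\sum_{y\in\mathbb F_p^\times}\theta(-zy^2)=\phi(-z)g(\phi)-1$ rather than through Lemma~\ref{lemma:charsum}, and your involution $m\mapsto d-2-m$ is a concrete form of the paper's remark that integrality of $\frac{(2m+1)(p-1)}{2(d-1)}$ is preserved under negation --- so no further comparison is needed.
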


\begin{proof}[Proof of Theorem \ref{theorem:pointcount2}]
Throughout, assume that $p$ is an odd prime. We follow the method of proof used in Appendix \ref{sec:pointcount}. Let $P(x,y)=x^{d}+cx-y^2$. As in Appendix \ref{sec:pointcount}, this yields
$$p\cdot(\#C_d(\mathbb F_p)-1) = 1+ \frac1p \sum_{z\in\mathbb F_p} \sum_{x,y\in\mathbb F_p} \theta(zP(x,y)).$$
Note that when $z=0$, $\sum_{x,y\in\mathbb F_p} \theta(0\cdot P(x,y))=p^2$. We break up the sum as follows
\begin{align*}
	 \sum_{x,y,z\in\mathbb F_p} \theta(zP(x,y)) &= p^2 +  \sum_{z\in\mathbb F_p^\times} \theta(zP(0,0))+ \sum_{y,z\in\mathbb F_p^\times} \theta(zP(0,y))\\&\hspace{.1in}+ \sum_{x,z\in\mathbb F_p^\times} \theta(zP(x,0))+ \sum_{x,y,z\in\mathbb F_p^\times} \theta(zP(x,y))\\
	 &:=p^2+A+B+C+D.
\end{align*}
 
We will use Lemma \ref{lemma:charsum} and properties of Gauss sums to evaluate each of these sums.\\

{\bf Computing A}:

Since $P(0,0)=0$, $A=\sum_{z\in\mathbb F_p^\times} \theta(zP(0,0))= p-1$.\\

{\bf Computing B}: 
\begin{align*}
    B	=\sum_{y,z\in\mathbb F_p^\times} \theta(zP(0,y))&=\sum_{y,z\in\mathbb F_p^\times}\theta(-zy^2)\\
	&=\frac{1}{p-1} \sum_{i=0}^{p-2} G_{-i}T^i(-1)\sum_{y\in\mathbb F_p^\times} T^{2i}(y)\sum_{z\in\mathbb F_p^\times} T^{i}(z).
\end{align*}
Note that $\sum_{z\in\mathbb F_p^\times} T^{i}(z)=0$ unless $i=0$, in which case both of the sums over $z$ and $y$ equal $p-1$. Hence
$$B=G_0T^0(-1)(p-1) = -(p-1).$$

{\bf Computing C}:
\begin{align*}
C=\sum_{x,z\in\mathbb F_p^\times} \theta(zP(x,0))&=  \sum_{x,z\in\mathbb F_p^\times} \theta(zx^{d})\theta(zcx)\\
			 &= \frac{1}{(p-1)^2} \sum_{i,j=0}^{p-2} G_{-i}G_{-j}T^j(c)\sum_{z\in\mathbb F_p^\times} T^{i+j}(z)\sum_{x\in\mathbb F_p^\times} T^{id+j}(x).
\end{align*}
We will not break this down further since this will cancel with part of sum $D$.\\

{\bf Computing D}:

\begin{align*}
D&=\sum_{x,y,z\in\mathbb F_p^\times} \theta(zP(x,y))\\
	&=  \sum_{x,y,z\in\mathbb F_p^\times} \theta(zx^{d})\theta(zcx)\theta(-zy^2)\\
	&= \frac{1}{(p-1)^3} \sum_{i,j,k=0}^{p-2} G_{-i}G_{-j}G_{-k}T^j(c)T^k(-1)\sum_{z\in\mathbb F_p^\times} T^{i+j+k}(z)\sum_{x\in\mathbb F_p^\times} T^{id+j}(x)\sum_{y\in\mathbb F_p^\times} T^{2k}(y).
\end{align*}
Note that  $\sum_{y\in\mathbb F_p^\times} T^{2k}(y)=0$ unless $k=0$ or $k=\frac{p-1}2$. The case where $k=0$ negates the expression we found for $C$ since $G_{-k}=-1$ when $k=0$. We will denote the term with $k=\frac{p-1}2$ as $D'$. We break this term down further as follows.
\begin{align*}
D'&=\frac{1}{(p-1)^2} \sum_{i,j=0}^{p-2} G_{-i}G_{-j}G_{\frac{p-1}2}T^j(c)T^{\frac{p-1}2}(-1)\sum_{z\in\mathbb F_p^\times} T^{i+j+\frac{p-1}2}(z)\sum_{x\in\mathbb F_p^\times} T^{id+j}(x).
\end{align*}

The sum $\sum_{z\in\mathbb F_p^\times} T^{i+j+\frac{p-1}2}(z)=0$ unless $j=\frac{p-1}2-i$. Hence,
\begin{align*}
D'&=\frac{1}{p-1} \sum_{i=0}^{p-2} G_{-i}G_{-(\frac{p-1}2-i)}G_{\frac{p-1}2}T^{\frac{p-1}2-i}(c)T^{\frac{p-1}2}(-1)\sum_{x\in\mathbb F_p^\times} T^{i(d-1)+\frac{p-1}2}(x).
\end{align*}

The sum $\sum_{x\in\mathbb F_p^\times} T^{i(d-1)+\frac{p-1}2}(x)=0$ unless $i(d-1)+\frac{p-1}2$ is a multiple of $p-1$. This occurs when $i$ is an odd multiple of $\frac{p-1}{2(d-1)}$, i.e. $i=\frac{(2m+1)(p-1)}{2(d-1)}$ for some $m\geq 0$ in $\mathbb Z$.

\begin{align*}
D'&=\sum_{m} G_{-\frac{(2m+1)(p-1)}{2(d-1)}}G_{\frac{(2m+1)(p-1)}{2(d-1)}-\frac{p-1}2}G_{\frac{p-1}2}T^{\frac{p-1}2-\frac{(2m+1)(p-1)}{2(d-1)}}(c)T^{\frac{p-1}2}(-1).
\end{align*}

Letting $T_{d'}^{2m+1} =T^{\frac{(2m+1)(p-1)}{2(d-1)}}$ and recalling that $G_{a}:=g(T^a)$, we can write the above expression as
\begin{align*}
D'&=\sum_{m} g(\overline{T_{d'}^{2m+1}})g(T_{d'}^{2m+1}\phi)g(\phi)\overline{T_{d'}^{2m+1}}(c)\phi(-c).
\end{align*}
We use Equation \eqref{eqn:gaussjacobi}  to rewrite $D'$ as
\begin{align*}
      D'&=p\sum_{m} \overline{T_{d'}^{2m+1}}(-c)\phi(c)J(\overline{T_{d'}^{2m+1}},\phi).
\end{align*}

We now combine these results to get 
\begin{align*}
	\#C_d(\mathbb F_p) &= 1+ \frac1p\left(p^2+A+B+C+D\right)\\
				       &= 1+ p+\sum_{m} \overline{T_{d'}^{2m+1}}(-c)\phi(c)J(T_{d'}^{2m+1},\phi)
\end{align*}
where, as above, the sum is over all $m\in\mathbb Z$ such that $\frac{(2m+1)(p-1)}{2(d-1)}\in[0,p-2]$ is integral.

\end{proof}

We will now explore the sets of $m$ that we obtain for different values of $p$ and $d$. Our sum of Gauss sums in the point count is over all $i$ such that $i(d-1)+\frac{p-1}{2}$ is an integer multiple of $p-1$. Hence, we are looking for values of $i$ that are odd multiples of $\frac{p-1}{2(d-1)}$.

First note that, regardless of the value of $d$, $2(d-1)$ is even. We can write $2(d-1)=2^ln$, for some odd $n\in \mathbb Z$. Hence, $\frac{(2m+1)(p-1)}{2(d-1)}\in[0,p-2]$ is integral when $(2m+1)(p-1)$ is divisible by $2^ln$. The values of $m$ will now depend on $p$. We split into cases.

If $p\equiv 1 \pmod{ 2^ln}$, then $i=\frac{(2m+1)(p-1)}{2(d-1)}=\frac{(2m+1)(p-1)}{2^ln}$ is an integer for any integer $m$. We restrict $m$ to the interval $[0,d-2]$ in order to obtain $i \in[0,p-2]$. To see why this is true, note that if $m=0$ then $\frac{(2m+1)(p-1)}{2(d-1)}=\frac{p-1}{2(d-1)}$, which is in the interval $[0,p-2]$. Similarly, if $m=d-2$, then $\frac{(2m+1)(p-1)}{2(d-1)}=\frac{(2d-3)(p-1)}{2d-2}<p-2$. \footnote{Note that this is true whenever $p>2d-1$. For smaller values of $p$, we will need to further restrict how large $m$ is.}  However if $m=d-1$, then $\frac{(2m+1)(p-1)}{2(d-1)}=\frac{(2d-1)(p-1)}{2d-2}>p-1>p-2$.

Suppose instead that $p\equiv 1 \pmod{ 2^ln'}$, where $n'<n$ is a divisor of $n$ (and that $p\not\equiv 1 \pmod{ 2^ln}$). In this case, $i=\frac{(2m+1)(p-1)}{2(d-1)}$ is an integer whenever $2m+1$ is a multiple of $n/n'$. To see why this is true, we let $2m+1=\frac{n}{n'}(2k +1)$, where $k$ is some integer. Then 
$$\frac{(2m+1)(p-1)}{2(d-1)}= \frac{\frac{n}{n'}(2k+1)(p-1)}{2^ln} = (2k+1)\cdot \frac{p-1}{2^ln'},$$
which is in $\mathbb Z$. We restrict $k$ to the interval $\left[0,\frac{2^ln'-1}{2}-1\right]$ in order to obtain $i \in[0,p-2]$ since if $k=\frac{2^ln'-1}{2}-1$ then
$$i=\frac{\frac{n}{n'}(2k+1)(p-1)}{2^ln} = \frac{(2\cdot\frac{2^ln'-1}{2}-2+1)(p-1)}{2^ln'} = \frac{(2^ln'-1)(p-1)}{2^ln'}<p-1.$$

Note that in the special case where $n'=1$, then we simply need $2m+1=n(2k+1)$ and $k\in\left[0,\frac{2^l-1}{2}-1\right]$.

Finally, suppose $p\not\equiv 1 \pmod{ 2^l}$. In this case, there are no values of $m$ such that $i=\frac{(2m+1)(p-1)}{2^ln}$ is an integer because we will be left with an even number in our denominator after canceling powers of 2 with $p-1$. In this case, the point count formula reduces to 
$$|C_d(\mathbb F_p)| = p+1.$$

We demonstrate this in the following example.

\begin{example}\label{example:g=3}
We will examine the number of points on the curve $y^2=x^7+cx$ over $\mathbb F_p$ for various primes $p$. Note that this is the genus 3 curve studied in \cite{Fite2016}. Since $d=7$, we have $2(d-1)=12$ and $d-2=5$.

If $p\equiv 1 \pmod{12}$, then we will have the maximum number of values for $i$. Explicitly, we have 
$$i\in\left\{\left. \frac{(2m+1)(p-1)}{12} \;\right| \; 0\leq m\leq 5  \right\}.$$
Thus, when $p\equiv 1 \pmod{12}$, our point count will be
\begin{align*}
	\#C_7(\mathbb F_p) &= p +1+ \sum_{m=0}^{5} \overline{T_{d'}^{2m+1}}(-c)\phi(c)J(\overline{T_{d'}^{2m+1}},\phi)
\end{align*}
where $T_{d'}$ is a character of order 12.\\

If $p\equiv 1 \pmod{4}$ and $p\not\equiv 1 \pmod{12}$, we will still have some terms from the Jacobi sum expression. Note that in this case $\frac{(2m+1)(p-1)}{12}$ will be an integer whenever $2m+1$ is divisible by 3. Hence, 
$$i\in\left\{\left. \frac{3(2k+1)(p-1)}{12} \;\right|\; 0\leq k\leq d/3-1  \right\}.$$
This yields the following
\begin{align*}
	\#C_7(\mathbb F_p)  &= p+1+ \overline{T_{d'}^3}(-c)\phi(c)J(\overline{T_{d'}^{3}},\phi)+ \overline{T_{d'}^9}(-c)\phi(c)J(\overline{T_{d'}^{9}},\phi)
\end{align*}
where $T_{d'}^3$ is a character of order 4.

If $p\equiv 3 \pmod{4}$, then $\frac{(2m+1)(p-1)}{12}$ will never be an integer. Hence, 
\begin{align*}
	\#C_7(\mathbb F_p) &= p+1.
\end{align*}

\end{example}

\bibliographystyle{abbrv}
\bibliography{AWSFinal2018}

\end{document}